\documentclass[10pt]{article}
\usepackage[utf8]{inputenc}
\usepackage{amsmath}
\usepackage{amssymb}
\usepackage{amsfonts}
\usepackage{enumitem}
\usepackage{amsthm}
\usepackage{geometry}
\usepackage{parskip}
\usepackage{graphicx}
\geometry{a4paper, left=25mm, right=25mm, top=25mm, bottom=25mm}
\newcommand*\samethanks[1][\value{footnote}]{\footnotemark[#1]}

\title{Optimal control on graphs: existence, uniqueness, and long-term behavior\thanks{The authors would like to thank Guillaume Carlier (Université Paris Dauphine), Jean-Michel Lasry (Institut Louis Bachelier), and Jean-Michel Roquejoffre (Université Paul Sabatier) for the discussions they had on the subject.}}
\author{Olivier Guéant\thanks{Université Paris 1 Panthéon-Sorbonne. Centre d'Economie de la Sorbonne. 106, Boulevard de l'Hôpital, 75013 Paris.}, Iuliia Manziuk\samethanks[2]}
\date{}

\newtheorem{theorem}{Theorem}
\newtheorem{proposition}{Proposition}
\newtheorem{corollary}{Corollary}
\newtheorem{lemma}{Lemma}
\newtheorem{remark}{Remark}

\makeatletter
\def\blfootnote{\xdef\@thefnmark{}\@footnotetext}
\makeatother

\begin{document}

\maketitle

\abstract{The literature on continuous-time stochastic optimal control seldom deals with the case of discrete state spaces. In this paper, we provide a general framework for the optimal control of continuous-time Markov chains on finite graphs. In particular, we provide results on the long-term behavior of value functions and optimal controls, along with results on the associated ergodic Hamilton-Jacobi equation.}

\vspace{5mm}

\noindent \textbf{Key words:} Optimal control, Graphs, Asymptotic analysis, Ergodic Hamilton-Jacobi equation.\vspace{5mm}
\setlength{\parindent}{0em}

\blfootnote{Corresponding author: Pr. Olivier Guéant -- olivier.gueant@univ-paris1.fr}

\section{Introduction}

Optimal control is the field of mathematics dealing with the problem of the optimal way to control a dynamical system according to a given  optimality criterion. Since the 1950s and the seminal works of Bellman and Pontryagin, the number of successful applications have been so vast, and in so many domains, that optimal control theory can be regarded as one of the major contributions of applied mathematicians in the second half of the 20th century.\\

In spite of their widespread use, it is noteworthy that the theory of optimal control and that of stochastic optimal control (see for instance \cite{bertsekas2005dynamic}) have mainly been developed either in continuous time with a continuous state space, with tools coming from variational calculus, Euler-Lagrange equations, Hamilton-Jacobi(-Bellman) equations, the notion of viscosity solutions, etc. (see \cite{fleming2006controlled} for instance), or in discrete time, both on discrete and continuous state spaces, with contributions coming from both mathematics and computer science / machine learning (see the recent advances in reinforcement learning -- \cite{sutton2018reinforcement}).\\

Stochastic optimal control of continuous-time Markov chains on discrete state spaces is rarely tackled in the literature. It is part of the larger literature on the optimal control of point processes which has always been marginal (see \cite{bremaud1981point}) in spite of applications, for instance in finance -- see the literature on market making \cite{cartea2015algorithmic,gueant2016financial} which motivated our study.\\

In this short and modest paper, we aim at filling the gap by proposing a framework for the optimal control of continuous-time Markov chains on finite graphs. Using the classical mathematical techniques associated with Hamilton-Jacobi equations, we show the well-posedness of the differential equations characterizing the value functions and the existence of optimal controls. These results are elementary -- they do not need viscosity solutions -- and have already been derived in a similar manner for the more general case of mean field games on graphs (see \cite{gueant2015existence}). In this framework, we derive however a result that is absent from the literature in the case of the control of continuous-time Markov chains on discrete state spaces:  that of the long-term behavior of the value functions and the optimal controls, i.e. their behavior when the time horizon goes to infinity.\\

In the case of the optimal control of continuous-time Markov chains on connected finite graphs, under mild assumptions on the Hamiltonian functions that basically prevent the creation of several connected components in the graph, asymptotic results can in fact be obtained using simple tools: (i)~the existence of an ergodic constant is proved following the classical arguments of Lions, Papanicolaou and Varadhan (see \cite{lpv}), and (ii) the convergence of the ``de-drifted'' value function toward a solution of an ergodic Hamilton-Jacobi equation is proved -- following a discussion with Jean-Michel Roquejoffre -- using comparison principles and compactness results, that is, without relying on the use of KAM theory for Hamilton-Jacobi equations of order 1 as is the case in the classical results of Fathi in \cite{fathi}, nor on very specific assumptions regarding the Hamiltonian function as was the case in the initial results of Namah and Roquejoffre (see \cite{namah}) -- see also the paper of Barles and Souganidis \cite{bs} for general results of convergence in the case of a continuous state space. Moreover, we obtain the uniqueness (up to constants) of the solution to that ergodic Hamilton-Jacobi equation; a result that is not always true in the case of Hamilton-Jacobi equations of order 1.\\

In Section 2 we introduce the notations and describe both the framework and the initial finite-horizon control problem. In Section 3 we derive existence and uniqueness results for the solution of the Hamilton-Jacobi equation associated with the finite-horizon control problem and derive the optimal controls. In Section 4, we consider the infinite-horizon control problem with a positive discount rate and study the convergence of the stationary problem when the discount rate tends to~$0$. In Section 5, we use the results obtained in the stationary case to derive our main result: the asymptotic behavior of both the value functions and the optimal controls in the finite-horizon control problem when there is no discount.\\

\section{Notation and problem description}
\label{notation}
Let $T \in \mathbb{R}^*_+$. Let $\left(\Omega,\left(\mathcal{F}_{t}\right)_{t\in [0,T]},\mathbb{P}\right)$
be a filtered probability space, with $\left(\mathcal{F}_{t}\right)_{t\in [0,T]}$
satisfying the usual conditions. We assume that all stochastic processes introduced in this paper are defined on~$\Omega$ and adapted to the filtration $\left(\mathcal{F}_{t}\right)_{t\in [0,T]}$.\\

We consider a connected directed graph $\mathcal{G}$. The set of nodes are denoted by $\mathcal{I} = \{1, \ldots, N\}$. For each node $i \in \mathcal{I}$, we introduce $\mathcal{V}(i) \subset \mathcal{I} \setminus \{i\}$ the neighborhood of the node $i$, \emph{i.e.} the set of nodes $j$ for which a directed edge exists from $i$ to $j$. At any time $t \in [0,T]$, instantaneous transition probabilities are described by a collection of feedback control functions ${(\lambda_t(i, \cdot))}_{i \in \mathcal{I}}$ where $\lambda_t(i, \cdot): \mathcal{V}(i)\rightarrow \mathbb{R}_+$. We assume that the controls are in the admissible set $\mathcal{A}^T_0$ where, for $t\in [0,T]$,
\begin{align*}
    \mathcal{A}^T_t\!\! =\! \{&(\lambda_s(i, j))_{s \in [t, T], i \in \mathcal{I}, j \in \mathcal{V}(i)} \text{ non-negative, deterministic}| \forall i \in \mathcal{I}, \forall j \in \mathcal{V}(i), s \mapsto \lambda_s(i, j) \in L^{1}(t, T)\}.
\end{align*}

We consider an agent evolving on the graph $\mathcal{G}$. This agent can pay a cost to choose the values of the transition probabilities. We assume that the instantaneous cost of the agent located at node $i$ is described by a function $L(i, \cdot): \left(\lambda_{ij}\right)_{j \in \mathcal{V}(i)} \in {\mathbb{R}}^{|\mathcal{V}(i)|}_+ \mapsto L\left(i, \left(\lambda_{ij}\right)_{j \in \mathcal{V}(i)}\right) \in \mathbb{R} \cup \{+\infty\}$, where $|\mathcal{V}(i)|$ stands for the cardinality of the set $\mathcal{V}(i)$. The assumptions made on the functions $(L(i, \cdot))_{i \in \mathcal{I}}$ are the following:\footnote{Of course these functions can represent rewards if the value of the costs is negative.} \\

\begin{enumerate}[label=(A\arabic*)]
    \item \label{finiteAssump} Non-degeneracy: $\forall i \in \mathcal{I}, \exists \left(\lambda_{ij}\right)_{j \in \mathcal{V}(i)} \in {\mathbb{R}}^{*|\mathcal{V}(i)|}_+, L\left(i, \left(\lambda_{ij}\right)_{j \in \mathcal{V}(i)}\right) <+\infty$;
    \item \label{lscAssump} Lower semi-continuity: $\forall i \in \mathcal{I}$, $L(i, \cdot)$ is lower semi-continuous;
    \item \label{boundAssump} Boundedness from below: $\exists \underline{C} \in \mathbb{R}$, $\forall i \in \mathcal{I}$, $\forall \left(\lambda_{ij}\right)_{j \in \mathcal{V}(i)} \in {\mathbb{R}}^{|\mathcal{V}(i)|}_+$, $L\left(i, \left(\lambda_{ij}\right)_{j \in \mathcal{V}(i)}\right) \ge \underline{C}$;
    \item Asymptotic super-linearity: \label{asympAssump}\begin{align}
        & \forall i \in \mathcal{I}, \lim_{\left\|\left(\lambda_{ij}\right)_{j \in \mathcal{V}(i)}\right\|_{\infty} \rightarrow +\infty} \frac{L\left(i, \left(\lambda_{ij}\right)_{j \in \mathcal{V}(i)}\right)}{\left\|\left(\lambda_{ij}\right)_{j \in \mathcal{V}(i)}\right\|_{\infty}} = +\infty.
    \end{align}
\end{enumerate}

At time $T$, we consider a terminal payoff for the agent. This payoff depends on his position on the graph and is modelled by a function $g: \mathcal{I} \rightarrow \mathbb{R}$.\\

Let us denote by $(X_s^{t, i, \lambda})_{s \in [t, T]}$ the continuous-time Markov chain on $\mathcal{G}$ starting from node $i$ at time~$t$, with instantaneous transition probabilities given by $\lambda \in \mathcal{A}^T_t$.\\

Starting from a given node $i$ at time $0$, the control problem we consider is the following:\\
\begin{align}
\label{controlpbm}
    & \sup_{\lambda \in \mathcal{A}^T_0} \mathbb{E}\left[ -\int_0^T e^{-rt} L\left(X^{0,i,\lambda}_t, \left(\lambda_t\left(X^{0,i,\lambda}_t, j\right)\right)_{j \in \mathcal{V}\left(X^{0,i,\lambda}_t\right)}\right)dt + e^{-rT} g\left(X^{0,i,\lambda}_T\right)\right],
\end{align}
where $r \ge 0$ is a discount rate.  \\

For each node $i \in \mathcal{I}$, the value function of the agent is defined as
\begin{align*}
    & u^{T,r}_i(t) = \sup_{\lambda \in \mathcal{A}^T_t} \mathbb{E}\left[ -\int_t^T e^{-r(s-t)} L\left(X_s^{t, i, \lambda}, \left(\lambda_s\left(X_s^{t, i, \lambda}, j\right)\right)_{j \in \mathcal{V}\left(X^{t,i,\lambda}_s\right)}\right)ds + e^{-r(T-t)} g\left(X_T^{t, i, \lambda}\right)\right].
\end{align*}

The Hamilton-Jacobi equation associated with the above optimal control problem is
\begin{eqnarray*}
    \forall i \in \mathcal{I},\quad 0 &=& \frac{d}{dt}{V^{T,r}_i}(t) - r V^{T,r}_i(t)\\
     &&+\sup_{\left(\lambda_{ij}\right)_{j \in \mathcal{V}(i)} \in \mathbb{R}^{|\mathcal{V}(i)|}_+} \left( \left(\sum_{j \in \mathcal{V}(i)}\lambda_{ij}\left(V^{T,r}_j(t) - V^{T,r}_i(t)\right) \right) - L\left(i, \left(\lambda_{ij}\right)_{j \in \mathcal{V}(i)}\right)\right),
\end{eqnarray*}
with terminal condition
\begin{align}
\label{terminal condition}
    V^{T,r}_i(T) = g(i), \quad \forall i \in \mathcal{I}.
\end{align}

Let us define the Hamiltonian functions associated with the cost functions $(L(i, \cdot))_{i \in \mathcal{I}}$:
\begin{align*}
    & \forall i \in \mathcal{I}, H(i,\cdot): p \in \mathbb{R}^{|\mathcal{V}(i)|} \mapsto H(i, p) = \sup_{\left(\lambda_{ij}\right)_{j \in \mathcal{V}(i)} \in {\mathbb{R}}^{|\mathcal{V}(i)|}_+}  \left(\left(\sum_{j \in \mathcal{V}(i)} \lambda_{ij} p_j\right) - L\left(i, \left(\lambda_{ij}\right)_{j \in \mathcal{V}(i)}\right)\right).
\end{align*}
Then the above Hamilton-Jacobi equation can be reformulated as
\begin{align}
    \label{mainEQH}
    & \frac{d}{dt}{V^{T,r}_i}(t) - r V^{T,r}_i(t) + H\left(i, \left(V^{T,r}_j(t) - V^{T,r}_i(t)\right)_{j \in \mathcal{V}(i)}\right) = 0, \quad \forall (i,t) \in \mathcal{I}\times[0,T].
\end{align}

Our goal in the next section is to prove that there exists a unique strong solution to Eq. \eqref{mainEQH} with terminal condition \eqref{terminal condition}.

\section{Existence and uniqueness of the solution to the Hamilton-Jacobi equation}
\label{fh}

In order to prove existence and uniqueness for the Hamilton-Jacobi equation \eqref{mainEQH} with terminal condition \eqref{terminal condition}, we start with a proposition on the Hamiltonian functions.

\begin{proposition}
\label{H}
$\forall i \in \mathcal{I}$, the function $H(i, \cdot)$ is well defined (i.e. finite) and verifies the following properties:\\

\begin{enumerate}[label=(P\arabic*)]
\item \label{reached} $\forall p=(p_j)_{j \in \mathcal{V}(i)} \in \mathbb{R}^{|\mathcal{V}(i)|}, \exists \left(\lambda^*_{ij}\right)_{j \in \mathcal{V}(i)} \in {\mathbb{R}}^{|\mathcal{V}(i)|}_+,  H(i,p) = \left(\sum_{j \in \mathcal{V}(i)} \lambda^*_{ij} p_j\right) - L\left(i, \left(\lambda^*_{ij}\right)_{j \in \mathcal{V}(i)}\right).$\\
\item \label{Hconvex}$H(i, \cdot)$ is convex on $\mathbb{R}^{|\mathcal{V}(i)|}$.\\
\item \label{Hnd}$H(i, \cdot)$ is non-decreasing with respect to each coordinate.\\
\end{enumerate}
\end{proposition}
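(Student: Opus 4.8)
The plan is to treat the four assertions in increasing order of difficulty, the crux being a coercivity argument showing that, although the supremum defining $H(i,\cdot)$ ranges over the unbounded set $\mathbb{R}_+^{|\mathcal{V}(i)|}$, it can be replaced by a maximum over a compact set. Throughout, write $\Phi_p\big((\lambda_{ij})_j\big) = \sum_{j \in \mathcal{V}(i)} \lambda_{ij} p_j - L\big(i,(\lambda_{ij})_j\big)$ for the objective.

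First I would bound $H(i,p)$ from below: by the non-degeneracy assumption \ref{finiteAssump} there exists $(\lambda^0_{ij})_{j} \in \mathbb{R}_+^{|\mathcal{V}(i)|}$ with $L\big(i,(\lambda^0_{ij})_j\big) < +\infty$, so $H(i,p) \ge \Phi_p\big((\lambda^0_{ij})_j\big) =: c_0(p) > -\infty$. Then comes the main step. Using $\sum_j \lambda_{ij} p_j \le \|p\|_1 \,\|(\lambda_{ij})_j\|_\infty$ (valid since $\lambda_{ij}\ge 0$), I would apply the asymptotic super-linearity \ref{asympAssump} with the constant $\|p\|_1 + 1$ to get a radius $R > 0$ — which I also take large enough that $\|(\lambda^0_{ij})_j\|_\infty \le R$ and $-R < c_0(p)$ — such that $\|(\lambda_{ij})_j\|_\infty > R$ forces $\Phi_p\big((\lambda_{ij})_j\big) \le -\|(\lambda_{ij})_j\|_\infty < -R < c_0(p) \le H(i,p)$. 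Consequently the supremum is unchanged if restricted to the nonempty compact set $K_R = \{(\lambda_{ij})_j \in \mathbb{R}_+^{|\mathcal{V}(i)|} : \|(\lambda_{ij})_j\|_\infty \le R\}$.

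On $K_R$ the map $\Phi_p$ is upper semi-continuous, being the sum of the continuous linear term $\sum_j \lambda_{ij} p_j$ and of $-L(i,\cdot)$, which is upper semi-continuous by \ref{lscAssump}. An upper semi-continuous function on a nonempty compact set attains its supremum, which yields property \ref{reached}; combined with the bound $\Phi_p \le \|p\|_1 R - \underline{C}$ on $K_R$ coming from \ref{boundAssump}, it also gives $H(i,p) < +\infty$, so $H(i,\cdot)$ is well defined.

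Property \ref{Hconvex} is then immediate: for each fixed admissible $(\lambda_{ij})_j$ the map $p \mapsto \Phi_p\big((\lambda_{ij})_j\big)$ is affine, and $H(i,\cdot)$, a pointwise supremum of affine functions, is convex. Property \ref{Hnd} follows from the sign constraint $\lambda_{ij} \ge 0$: if $p \le \tilde p$ coordinatewise, then $\sum_j \lambda_{ij} p_j \le \sum_j \lambda_{ij} \tilde p_j$ for every admissible $(\lambda_{ij})_j$, hence $\Phi_p \le \Phi_{\tilde p}$ pointwise and taking suprema preserves the inequality. I expect the coercivity/compactness reduction in the second and third paragraphs to be the only delicate point: one must combine super-linearity, the a priori lower bound from non-degeneracy, and lower semi-continuity in the right order so that the unbounded maximization genuinely becomes a maximum over a compact set. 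The remaining properties are routine.
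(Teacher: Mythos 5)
Your proposal is correct and follows essentially the same route as the paper: a coercivity argument via \ref{asympAssump} reduces the supremum to a compact set on which the upper semi-continuous objective attains its maximum, convexity comes from $H(i,\cdot)$ being a supremum of affine functions of $p$ (the paper phrases this as a Legendre--Fenchel transform of $L(i,\cdot)$ plus the indicator of the positive orthant, which is the same thing), and monotonicity follows pointwise from $\lambda_{ij}\ge 0$. The only cosmetic difference is your use of $\|p\|_1$ where the paper uses $\|p\|_\infty|\mathcal{V}(i)|$.
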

\begin{proof}

Let us consider $p=\left(p_j\right)_{j \in \mathcal{V}(i)} \in \mathbb{R}^{|\mathcal{V}(i)|}$.\\

From assumption \ref{finiteAssump}, we can consider $\left(\tilde{\lambda}_{ij}\right)_{j \in \mathcal{V}(i)} \in {\mathbb{R}}^{|\mathcal{V}(i)|}_+$ such that $L\left(i, \left(\tilde \lambda_{ij}\right)_{j \in \mathcal{V}(i)}\right) < +\infty$.\\

We then use assumption \ref{asympAssump} on the function $L(i, \cdot)$ to derive the existence of a positive number~$A$ such that
$$
    \forall \left(\lambda_{ij}\right)_{j \in \mathcal{V}(i)} \in {\mathbb{R}}^{|\mathcal{V}(i)|}_+, \left\|\left(\lambda_{ij}\right)_{j \in \mathcal{V}(i)}\right\|_{\infty} \ge A \Rightarrow  L\left(i, \left(\lambda_{ij}\right)_{j \in \mathcal{V}(i)} \right) \ge \left(1 + \left\|p\right\|_\infty |\mathcal{V}(i)|\right)  \left\|\left(\lambda_{ij}\right)_{j \in \mathcal{V}(i)}\right\|_{\infty}.
$$

Let us define $C = \max\left(A, L\left(i, \left(\tilde \lambda_{ij}\right)_{j \in \mathcal{V}(i)}\right) - \left(\sum_{j \in \mathcal{V}(i)} \tilde \lambda_{ij} p_j\right), \left\|\left(\tilde \lambda_{ij}\right)_{j \in \mathcal{V}(i)}\right\|_\infty\right)$.\\

For $\left(\lambda_{ij}\right)_{j \in \mathcal{V}(i)} \in {\mathbb{R}}^{|\mathcal{V}(i)|}_+$, if $\left\|\left(\lambda_{ij}\right)_{j \in \mathcal{V}(i)}\right\|_{\infty} \ge C$, then
\begin{eqnarray*}
\left(\sum_{j \in \mathcal{V}(i)} \lambda_{ij} p_j\right) - L\left(i, \left(\lambda_{ij}\right)_{j \in \mathcal{V}(i)}\right) \le \left\|\left(\lambda_{ij}\right)_{j \in \mathcal{V}(i)}\right\|_{\infty} \left\|p\right\|_\infty  |\mathcal{V}(i)| - \left(1 + \left\|p\right\|_\infty |\mathcal{V}(i)|\right)  \left\|\left(\lambda_{ij}\right)_{j \in \mathcal{V}(i)}\right\|_{\infty}\\
\end{eqnarray*}
\begin{eqnarray*}
&\le& - \left\|\left(\lambda_{ij}\right)_{j \in \mathcal{V}(i)}\right\|_{\infty}\\
&\le& \left(\sum_{j \in \mathcal{V}(i)} \tilde \lambda_{ij} p_j\right) - L\left(i, \left(\tilde \lambda_{ij}\right)_{j \in \mathcal{V}(i)}\right).
\end{eqnarray*}
Therefore
$$\sup_{\left(\lambda_{ij}\right)_{j \in \mathcal{V}(i)} \in {\mathbb{R}}^{|\mathcal{V}(i)|}_+}  \left(\left(\sum_{j \in \mathcal{V}(i)} \lambda_{ij} p_j\right) - L\left(i, \left(\lambda_{ij}\right)_{j \in \mathcal{V}(i)}\right)\right)$$$$=\sup_{\left(\lambda_{ij}\right)_{j \in \mathcal{V}(i)} \in \mathcal{C}}  \left(\left(\sum_{j \in \mathcal{V}(i)} \lambda_{ij} p_j\right) - L\left(i, \left(\lambda_{ij}\right)_{j \in \mathcal{V}(i)}\right)\right),$$
where $\mathcal{C} = \left\{ \left(\lambda_{ij}\right)_{j \in \mathcal{V}(i)} \in {\mathbb{R}}^{|\mathcal{V}(i)|}_+, \left\|\left(\lambda_{ij}\right)_{j \in \mathcal{V}(i)}\right\|_\infty \le C\right\}$. By using assumption \ref{lscAssump}, we obtain that the supremum is reached on the compact set~$\mathcal{C}$.\\

Regarding the convexity of $H(i, \cdot)$, we simply need to write it as a Legendre-Fenchel transform (denoted hereafter by the sign $^\star$):
\begin{eqnarray*}
H(i,p) &=& \sup_{\left(\eta_{ij}\right)_{j \in \mathcal{V}(i)} \in \mathbb{R}^{|\mathcal{V}(i)|}} \left( \left(\sum_{j \in \mathcal{V}(i)} \eta_{ij} p_j\right)  - \left(L\left(i, \left(\eta_{ij}\right)_{j \in \mathcal{V}(i)}\right) + \chi\left((\eta_{ij})_{j \in \mathcal{V}(i)}\right)\right)\right)\\
 &=& (L(i, \cdot) + \chi(\cdot))^\star(p),
\end{eqnarray*}
where \begin{align*}
    & \chi: \eta \in \mathbb{R}^{|\mathcal{V}(i)|} \mapsto \begin{cases}
    0, \text{ if } \eta \in \mathbb{R}^{|\mathcal{V}(i)|}_+\\
    +\infty, \text{ otherwise}.\\
    \end{cases}
\end{align*}

Let us prove now that $H(i, \cdot)$ is non-decreasing with respect to each coordinate. Let us consider $p=(p_j)_{j \in \mathcal{V}(i)}$ and $p'=(p'_j)_{j \in \mathcal{V}(i)}$ such that $\forall j \in \mathcal{V}(i), p_j \ge p'_j$. Then we have
$$ \forall \left(\lambda_{ij}\right)_{j \in \mathcal{V}(i)} \in {\mathbb{R}}^{|\mathcal{V}(i)|}_+,  \left(\sum_{j \in \mathcal{V}(i)} \lambda_{ij} p_j\right) - L\left(i, \left(\lambda_{ij}\right)_{j \in \mathcal{V}(i)}\right) \ge \left(\sum_{j \in \mathcal{V}(i)} \lambda_{ij} p'_j\right) - L\left(i, \left(\lambda_{ij}\right)_{j \in \mathcal{V}(i)}\right).$$
By taking the supremum on both sides, we obtain $H(i, p) \ge H(i, p')$, hence the result.
\end{proof}

We now turn to a central result for existence and uniqueness: a comparison principle that applies to the Hamilton-Jacobi equation.

\begin{proposition}[Comparison principle]
\label{CompPrinc}
Let $t' \in (-\infty,T)$. Let $(v_i)_{i \in \mathcal{I}}$ and $(w_i)_{i \in \mathcal{I}}$ be two continuously differentiable functions on $[t',T]$ such that
\begin{align}
\label{subsolution}
    & \frac{d}{dt}v_i(t) - r v_i(t) + H\left(i, \left(v_j(t) - v_i(t)\right)_{j \in \mathcal{V}(i)}\right)  \ge 0,\quad \forall (i,t) \in \mathcal{I} \times [t', T],\\
\label{supersolution}
    & \frac{d}{dt}w_i(t) - r w_i(t) + H\left(i, \left(w_j(t) - w_i(t)\right)_{j \in \mathcal{V}(i)}\right)  \le 0,\quad  \forall (i,t) \in \mathcal{I} \times [t', T],
\end{align}
and $v_i(T) \le w_i(T), \forall i \in \mathcal{I}$.\\

Then $v_i(t) \le w_i(t)$, $ \forall (i,t) \in \mathcal{I} \times [t', T]$.\\
\end{proposition}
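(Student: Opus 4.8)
The plan is to run the classical comparison argument for Hamilton--Jacobi equations, adapted to the present finite-graph ODE system: perturb the subsolution so that \eqref{subsolution} becomes a \emph{strict} inequality, rule out an interior or left-endpoint maximum of the node-wise difference by combining a first-order condition with the monotonicity property \ref{Hnd} of the Hamiltonians, and finally send the perturbation parameter to zero.

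Concretely, I would fix $\delta > 0$ and set $v_i^\delta(t) = v_i(t) + \delta(t-T)$ for $i \in \mathcal{I}$, $t \in [t',T]$. Since this perturbation does not depend on $i$, the increments $v_j^\delta(t) - v_i^\delta(t) = v_j(t) - v_i(t)$ are unchanged, so substituting $v^\delta$ into the left-hand side of \eqref{subsolution} adds exactly $\delta - r\delta(t-T) = \delta + r\delta(T-t)$, which is $\ge \delta$ because $r \ge 0$ and $t \le T$. Hence $\frac{d}{dt}v_i^\delta(t) - r v_i^\delta(t) + H\big(i,(v_j^\delta(t)-v_i^\delta(t))_{j\in\mathcal V(i)}\big) \ge \delta > 0$ on $\mathcal I \times [t',T]$, while $v_i^\delta(T) = v_i(T) \le w_i(T)$.

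Then I would argue by contradiction: suppose $M_\delta := \max_{(i,t)\in\mathcal I\times[t',T]}\big(v_i^\delta(t)-w_i(t)\big) > 0$, the maximum being attained since $\mathcal I$ is finite, $[t',T]$ is compact, and the functions are continuous. Let $(i_0,t_0)$ be a maximizer; the terminal inequality forces $t_0 < T$. By maximality in the node variable, $v_j^\delta(t_0) - v_{i_0}^\delta(t_0) \le w_j(t_0) - w_{i_0}(t_0)$ for every $j \in \mathcal V(i_0)$, so \ref{Hnd} gives $H\big(i_0,(v_j^\delta(t_0)-v_{i_0}^\delta(t_0))_j\big) \le H\big(i_0,(w_j(t_0)-w_{i_0}(t_0))_j\big)$. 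Subtracting \eqref{supersolution} at $(i_0,t_0)$ from the strict subsolution inequality, using this Hamiltonian comparison together with $v_{i_0}^\delta(t_0)-w_{i_0}(t_0) = M_\delta \ge 0$ and $r \ge 0$, yields $\frac{d}{dt}\big(v_{i_0}^\delta - w_{i_0}\big)(t_0) \ge \delta + rM_\delta \ge \delta > 0$. But $t_0$ maximizes $t\mapsto (v_{i_0}^\delta-w_{i_0})(t)$ over $[t',T]$ with $t_0 < T$: if $t_0 \in (t',T)$ this derivative vanishes, and if $t_0 = t'$ it is $\le 0$ — contradiction in both cases. Therefore $M_\delta \le 0$, i.e. $v_i^\delta \le w_i$ on $[t',T]$ for every $\delta > 0$, and letting $\delta \to 0$ gives $v_i \le w_i$ on $[t',T]$.

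The only genuinely delicate point is the treatment of the discount: without the perturbation the same computation only gives $\frac{d}{dt}(v_{i_0}-w_{i_0})(t_0) \ge rM \ge 0$, which is not a contradiction when $r = 0$. The term $\delta$ — which survives precisely because the quantities fed into $H$ are insensitive to a spatially constant shift — restores a strict inequality uniformly in $t$, and it is essential that $r \ge 0$ so that the extra $-r\delta(t-T)$ term keeps the right sign on all of $[t',T]$, including for negative $t'$. Everything else is routine: the finiteness of $\mathcal G$ makes the spatial maximum elementary, and the one-sided derivative conditions at an interior or left-endpoint maximum are standard.
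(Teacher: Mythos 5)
Your proof is correct and follows essentially the same strategy as the paper's: perturb the subsolution by a spatially constant term $\delta(t-T)$ (the paper's $-\varepsilon(T-t)$) to make the inequality strict, locate a global maximizer of the node-wise difference, and combine the time first-order condition with the monotonicity property \ref{Hnd} to reach a contradiction. The only cosmetic difference is that the paper absorbs the discount term by multiplying by $e^{-rt}$, whereas you handle it directly via the sign of $M_\delta$ under the contradiction hypothesis; both are valid.
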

\begin{proof}
Let $\varepsilon > 0$. Let us define $z: (i,t) \in \mathcal{I} \times [t', T] \mapsto z_i(t) = e^{-rt} (v_i(t) - w_i(t) - \varepsilon (T - t))$.
We have
\begin{align*}
    \frac{d}{dt}z_i(t) &= -r e^{-rt} (v_i(t) - w_i(t) -  \varepsilon (T - t)) + e^{-rt}\left(\frac{d}{dt}v_i(t) - \frac{d}{dt}w_i(t) + \varepsilon\right) \\
    & = e^{-rt} \left(\left(\frac{d}{dt}v_i(t) - rv_i(t)\right) - \left(\frac{d}{dt}w_i(t) - rw_i(t)\right) +\varepsilon + r\varepsilon(T - t)\right) \\
    & \ge e^{-rt} \left(-H\left(i, \left(v_j(t) - v_i(t)\right)_{j \in \mathcal{V}(i)}\right)  + H\left(i, \left(w_j(t) - w_i(t)\right)_{j \in \mathcal{V}(i)}\right) + \varepsilon + r\varepsilon(T - t)\right).
\end{align*}
Let us choose $(i^*, t^*) \in \mathcal{I} \times [t', T]$ maximizing $z$.\\

If $t^* < T$, then $\frac{d}{dt}z_{i^*}\left(t^*\right) \le 0$. Therefore,
    \[
    H\left(i^*, \left((v_j\left(t^*\right) - v_{i^*}\left(t^*\right)\right)_{j \in \mathcal{V}(i^*)}\right) \ge H\left(i^*, \left((w_j\left(t^*\right) - w_{i^*}\left(t^*\right)\right)_{j \in \mathcal{V}(i^*)}\right) + \varepsilon + r\varepsilon(T - t^*).
    \]

    By definition of $(i^*, t^*)$, we know that $\forall j \in \mathcal{V}(i^*), v_{j}\left(t^*\right) - w_{j}\left(t^*\right) \le v_{i^*}\left(t^*\right) - w_{i^*}\left(t^*\right)$, and therefore $\forall j \in \mathcal{V}(i^*), v_{j}\left(t^*\right) - v_{i^*}\left(t^*\right) \le w_{j}\left(t^*\right) - w_{i^*}\left(t^*\right)$.\\

    From \ref{Hnd}, it follows that
    \begin{align*}
        H\left(i^*, \left(v_{j}\left(t^*\right) - v_{i^*}\left(t^*\right)\right)_{j \in \mathcal{V}(i^*)}\right) \le H\left(i^*, \left(w_{j}\left(t^*\right) - w_{i^*}\left(t^*\right)\right)_{j \in \mathcal{V}(i^*)}\right).
    \end{align*}
    This contradicts the above inequality. Therefore, $t^* = T$, and we have:
    \[ \forall (i,t) \in \mathcal{I} \times [t', T],\quad   z_i(t) \le z_{i^*}(T) = e^{-rT}(v_{i^*}(T) - w_{i^*}(T)) \le 0.\]
Therefore, $\forall (i,t) \in \mathcal{I} \times [t', T], \quad  v_i(t) \le w_i(t) + \varepsilon(T - t).$
We conclude by sending $\varepsilon$ to~$0$.
\end{proof}

We are now ready to prove existence and uniqueness.

\begin{theorem}[Global existence and uniqueness]
\label{global}There exists a unique solution $\left(V^{T,r}_i\right)_{i \in \mathcal I}$  to Eq. \eqref{mainEQH} on $(-\infty,T]$ with terminal condition \eqref{terminal condition}.
\end{theorem}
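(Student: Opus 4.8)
### Proof strategy

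The plan is to establish local existence and uniqueness first, and then use the comparison principle (Proposition~\ref{CompPrinc}) to upgrade local solutions to a global one on $(-\infty,T]$. Uniqueness is essentially immediate from the comparison principle: if $(V_i)_{i\in\mathcal I}$ and $(\tilde V_i)_{i\in\mathcal I}$ both solve Eq.~\eqref{mainEQH} with the same terminal condition~\eqref{terminal condition}, then each is simultaneously a subsolution and a supersolution in the sense of~\eqref{subsolution}--\eqref{supersolution} on any interval $[t',T]$, so applying Proposition~\ref{CompPrinc} in both directions gives $V_i\le\tilde V_i$ and $\tilde V_i\le V_i$, hence equality.

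For existence, I would first note that Eq.~\eqref{mainEQH} is an autonomous system of ODEs of the form $\frac{d}{dt}V(t)=F(V(t))$, where the $i$-th component of $F$ is $rV_i - H\big(i,(V_j-V_i)_{j\in\mathcal V(i)}\big)$. By Proposition~\ref{H}, each $H(i,\cdot)$ is finite and convex on $\mathbb R^{|\mathcal V(i)|}$, hence continuous, and in fact locally Lipschitz (a finite convex function on $\mathbb R^n$ is locally Lipschitz). Therefore $F$ is locally Lipschitz on $\mathbb R^N$, and the Cauchy--Lipschitz (Picard--Lindel\"of) theorem gives a unique maximal solution to the backward problem with $V_i(T)=g(i)$, defined on some maximal interval $(T_{\min},T]$ with $T_{\min}\in[-\infty,T)$.

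The crux is then to show $T_{\min}=-\infty$, i.e. no blow-up in finite (backward) time. This is where the comparison principle does the real work: I would exhibit explicit affine-in-$t$ sub- and supersolutions that bound any solution on any interval $[t',T]$ with $t'>T_{\min}$, with bounds depending only on $t'$ and not on the (a priori) blow-up time. Concretely, using assumption~\ref{boundAssump} one has $H(i,0)=\sup_{\lambda\ge0}\big(-L(i,\lambda)\big)\le -\underline C<+\infty$, and picking any admissible $\tilde\lambda$ from~\ref{finiteAssump} gives a finite lower bound $H(i,0)\ge -L(i,\tilde\lambda_i)$; moreover monotonicity~\ref{Hnd} controls $H(i,\cdot)$ on bounded sets. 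One checks that constants (when $r=0$) or functions of the form $t\mapsto Ce^{-rt}+D$ of suitably large slope provide a supersolution lying above $g$ at $T$ and a subsolution lying below $g$ at $T$; by Proposition~\ref{CompPrinc} the solution stays trapped between them on all of $[t',T]$. Since these barriers are finite for every finite $t'$, the solution cannot escape to infinity as $t\downarrow T_{\min}$, contradicting maximality unless $T_{\min}=-\infty$. I expect the only mildly delicate point to be bookkeeping the dependence of $H\big(i,(V_j-V_i)_j\big)$ on the \emph{differences} of the components (rather than on the components themselves) when constructing the barriers — but since adding a common constant to all $V_i$ leaves the differences unchanged, spatially constant barriers interact with $H$ only through $H(i,0)$, which makes the construction go through cleanly.
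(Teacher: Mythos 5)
Your proposal is correct and takes essentially the same route as the paper: Cauchy--Lipschitz gives a unique maximal solution (local Lipschitz continuity of $H(i,\cdot)$ following from its finiteness and convexity), and the comparison principle applied to explicit time-dependent barriers yields a priori bounds that preclude finite-time blow-up. The paper's barriers are $v^C_i(t)=e^{-r(T-t)}\left(g(i)+C(T-t)\right)$ rather than spatially constant ones, and it proves that the limit at the putative blow-up time exists via an explicit monotone auxiliary function instead of citing the standard blow-up alternative, but these are inessential variations; just note that when $r=0$ constants cannot serve as barriers (the residual is then exactly $H(i,0)$, whose sign you do not control), so you must take functions affine in $T-t$ with sufficiently large slopes of the appropriate signs --- which is what your ``suitably large slope'' already indicates.
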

\begin{proof}
$\forall i \in \mathcal{I}$, the function $H(i, \cdot)$ is locally Lipschitz because of \ref{Hconvex}. Therefore we can apply Cauchy–Lipschitz local existence and uniqueness theorem to Eq. \eqref{mainEQH} with terminal condition~\eqref{terminal condition}. Therefore there exists a maximal solution $\left(V^{T,r}_i\right)_{i \in \mathcal I}$ defined over $(\tau^*, T]$, where $\tau^* \in [-\infty, T)$.\\

Let us prove by contradiction that $\tau^* = -\infty$.\\

First of all, let us provide \emph{a priori} bounds to the functions $\left(V^{T,r}_i\right)_{i \in \mathcal I}$. For that purpose, let us define for $C \in \mathbb R$ to be chosen, the function $$ v^C : (i,t) \in \mathcal{I} \times (\tau^*, T] \mapsto v^C_i(t) = e^{-r(T - t)}\left(g(i) + C(T - t)\right).$$
We have $\forall i \in \mathcal{I}, v^C_i(T) = g(i)$ and
\begin{equation*}
\begin{aligned}
    & \frac{d}{dt}{v^C_i}(t) - r v^C_i(t) + H\left(i, \left(v^C_j(t) - v^C_i(t)\right)_{j \in \mathcal{V}(i)}\right) \\
     = & - Ce^{-r(T - t)} + H\left(i, e^{-r(T - t)}\left(g(j)-g(i)\right)_{j \in \mathcal{V}(i)}\right), \quad \forall (i,t) \in \mathcal{I} \times (\tau^*, T].
\end{aligned}
\end{equation*}
If $\tau^*$ is finite, the function $(i,t) \in \mathcal{I} \times (\tau^*, T] \mapsto e^{r(T - t)} H\left(i, e^{-r(T - t)}(g(j)-g(i))_{j \in \mathcal{V}(i)}\right)$ is bounded, hence the existence of two constants $C_1$ and $C_2$ such that $\forall (i,t) \in \mathcal{I} \times (\tau^*, T]$,
\begin{align*}
    &- C_1 e^{-r(T - t)} + H\left(i, e^{-r(T - t)}(g(j)-g(i))_{j \in \mathcal{V}(i)}\right) \ge 0, \quad \text{and} \\
    &- C_2 e^{-r(T - t)} + H\left(i, e^{-r(T - t)}(g(j)-g(i))_{j \in \mathcal{V}(i)}\right) \le 0.
\end{align*}
We can therefore apply the above comparison principle (Proposition \ref{CompPrinc}) to $v^{C_1}$ and $V^{T,r}$, and then to $V^{T,r}$ and $v^{C_2}$ over any interval $[t',T] \subset (\tau^*,T]$ to obtain:
$$ \forall (i,t) \in \mathcal{I} \times [t', T], \quad v^{C_1}_i(t) \le V^{T,r}_i(t) \le v^{C_2}_i(t).$$
Then, by sending $t'$ to $\tau^*$ we obtain that
$$ \forall (i,t) \in \mathcal{I} \times (\tau^*, T], \quad v^{C_1}_i(t) \le V^{T,r}_i(t) \le v^{C_2}_i(t).$$
In particular, $\tau^*$ finite implies that the functions $\left(V^{T,r}_i\right)_{i \in \mathcal I}$ are bounded.\\

Now, let us define for $h \in \mathbb R$ to be chosen, the function $$ w^h : (i,t) \in \mathcal{I} \times (\tau^*, T] \mapsto w^h_i(t) = e^{r(T - t)} V^{T,r}_i(t) + h(T-t).$$
We have
\begin{align*}
    \frac{d}{dt}{w^h_i}(t) & = -re^{r(T - t)}V^{T,r}_i(t) + e^{r(T - t)}\frac{d}{dt}{V^{T,r}_i}(t) - h \\
    & = e^{r(T - t)}\left(\frac{d}{dt}{V^{T,r}_i}(t) - rV^{T,r}_i(t)\right) - h \\
    & = - e^{r(T - t)} H\left(i, \left(V^{T,r}_j(t) - V^{T,r}_i(t)\right)_{j \in \mathcal{V}(i)}\right) - h.
\end{align*}
If $\tau^*$ is finite, using the boundedness of $\left(V^{T,r}_i\right)_{i \in \mathcal I}$, $\exists h \in \mathbb{R}, \forall (i,t) \in \mathcal{I} \times (\tau^*, T], \frac{d}{dt}{w^h_i}(t) \le 0$. Therefore $\lim_{t \rightarrow \tau^*, t > \tau^*} w_i(t)$ exists $\forall i \in \mathcal{I}$, so $\lim_{t \rightarrow \tau^*, t > \tau^*} V^{T,r}_i(t)$ exists $\forall i \in \mathcal{I}$, and it is finite as the functions $\left(V^{T,r}_i\right)_{i \in \mathcal I}$ are bounded. Thus we obtain a contradiction with the maximality of the solution $\left(V^{T,r}_i\right)_{i \in \mathcal I}$.\\

We conclude that $\tau^* = -\infty$, and that there exists a unique solution to Eq. \eqref{mainEQH} on $(-\infty,T]$ with terminal condition \eqref{terminal condition}.\\
\end{proof}

\begin{remark}
In the proof of the above results, the convexity of the Hamiltonian functions $(H(i,\cdot))_{i \in \mathcal I}$ does not play any role. The results indeed hold as soon as the Hamiltonian functions are locally Lipschitz and non-decreasing with respect to each coordinate.\\
\end{remark}

By using a standard verification argument, we obtain the solution to our initial control problem. This is the purpose of the next theorem.\\

\begin{theorem}
\label{theocontrol}
We have:\\

\begin{itemize}
\item $\forall (i,t) \in \mathcal I \times [0,T], u^{T,r}_i(t) = V^{T,r}_i(t)$.\\
\item The optimal controls for Problem \eqref{controlpbm} are given by any feedback control function verifying for all $i \in \mathcal I$, for all $j \in \mathcal V(i)$, and for all $t \in [0,T]$, $$\lambda^*_t(i,j)\! \in\! \underset{\left(\lambda_{ij}\right)_{j \in \mathcal{V}(i)} \in {\mathbb{R}}^{|\mathcal{V}(i)|}_+ }{\textrm{argmax}} \left( \left(\sum_{j \in \mathcal{V}(i)} \lambda_{ij} \left(u^{T,r}_j(t) - u^{T,r}_i(t)\right)\right) - L\left(i, \left(\lambda_{ij}\right)_{j \in \mathcal{V}(i)}\right)\right).$$
\end{itemize}
\end{theorem}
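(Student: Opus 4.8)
The plan is a classical verification argument based on Dynkin's formula for continuous-time Markov chains. Fix $(i,t)\in\mathcal I\times[0,T]$ and $\lambda\in\mathcal A^T_t$, and write $X_s=X^{t,i,\lambda}_s$. Since $\mathcal G$ is finite and $s\mapsto\sum_{j\in\mathcal V(k)}\lambda_s(k,j)$ belongs to $L^1(t,T)$ for every $k$, the chain performs almost surely finitely many jumps on $[t,T]$; there is no explosion, and as $V^{T,r}$ is continuous (hence bounded) on $[t,T]$, Dynkin's formula applies to the $C^1$ map $s\mapsto e^{-r(s-t)}V^{T,r}_{X_s}(s)$. Using the terminal condition \eqref{terminal condition}, it gives
\[
\mathbb E\big[e^{-r(T-t)}g(X_T)\big]-V^{T,r}_i(t)=\mathbb E\!\left[\int_t^T e^{-r(s-t)}\Psi_s\,ds\right],
\]
where $\Psi_s=\frac{d}{ds}V^{T,r}_{X_s}(s)-rV^{T,r}_{X_s}(s)+\sum_{j\in\mathcal V(X_s)}\lambda_s(X_s,j)\big(V^{T,r}_j(s)-V^{T,r}_{X_s}(s)\big)$, i.e.\ the sum of the time derivative and of the infinitesimal generator of $X$ acting at node $X_s$ on $k\mapsto V^{T,r}_k(s)$.

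Next I would substitute the Hamilton--Jacobi equation \eqref{mainEQH}, so that $\frac{d}{ds}V^{T,r}_k(s)-rV^{T,r}_k(s)=-H\big(k,(V^{T,r}_j(s)-V^{T,r}_k(s))_{j\in\mathcal V(k)}\big)$, and use the very definition of $H$: for every $k\in\mathcal I$, every $s$, and every nonnegative $(\lambda_{kj})_{j\in\mathcal V(k)}$,
\[
\sum_{j\in\mathcal V(k)}\lambda_{kj}\big(V^{T,r}_j(s)-V^{T,r}_k(s)\big)-L\big(k,(\lambda_{kj})_{j}\big)\ \le\ H\big(k,(V^{T,r}_j(s)-V^{T,r}_k(s))_{j}\big),
\]
with equality precisely when $(\lambda_{kj})_j$ is a maximizer. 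Hence $\Psi_s\le L\big(X_s,(\lambda_s(X_s,j))_{j\in\mathcal V(X_s)}\big)$, and rearranging the displayed identity and taking the supremum over $\lambda\in\mathcal A^T_t$ yields $V^{T,r}_i(t)\ge u^{T,r}_i(t)$. For the converse, I would introduce a feedback control $\lambda^*$ which at each $(k,s)$ picks a maximizer $\lambda^*_s(k,\cdot)$ of $(\lambda_{kj})_j\mapsto\sum_{j}\lambda_{kj}(V^{T,r}_j(s)-V^{T,r}_k(s))-L(k,(\lambda_{kj})_j)$ --- such a maximizer exists by \ref{reached}. Running the chain $X^*$ with these rates turns $\Psi_s\le L$ into an equality along $X^*$, so the value of the objective at $\lambda^*$ equals $V^{T,r}_i(t)$, whence $V^{T,r}_i(t)\le u^{T,r}_i(t)$. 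Both identities of the theorem then follow, with $\lambda^*$ optimal; the same computation shows that any measurable selection of maximizers is optimal, which is the assertion of the second bullet.

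The step needing the most care is verifying that $\lambda^*$ genuinely belongs to $\mathcal A^T_t$ and that all the expectations written above are finite. First, $\lambda^*$ is deterministic --- it depends on $s$ and the nodes only --- so the feedback chain $X^*$ is a well-defined time-inhomogeneous Markov chain. For the $L^1$-in-time and measurability requirements, observe that $V^{T,r}$ is continuous on the compact $[t,T]$, so the ``gradients'' $s\mapsto(V^{T,r}_j(s)-V^{T,r}_k(s))_j$ stay in a bounded set, and the argument already used in the proof of Proposition \ref{H} then shows that the corresponding maximizers all lie in one fixed compact subset of $\mathbb R^{|\mathcal V(k)|}_+$ (the threshold $C$ there can be taken to depend only on a bound for $\|p\|_\infty$). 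Since the objective $(\lambda_{kj})_j\mapsto\sum_j\lambda_{kj}(V^{T,r}_j(s)-V^{T,r}_k(s))-L(k,(\lambda_{kj})_j)$ is upper semi-continuous in $\lambda$ (by \ref{lscAssump}) and continuous in $s$, a measurable selection $s\mapsto\lambda^*_s(k,\cdot)$ exists by a standard measurable-selection theorem, and it is bounded, hence in $L^1(t,T)$. Finally, along $X^*$ one has $L\big(k,\lambda^*_s(k,\cdot)\big)=\sum_j\lambda^*_s(k,j)\big(V^{T,r}_j(s)-V^{T,r}_k(s)\big)-H\big(k,(V^{T,r}_j(s)-V^{T,r}_k(s))_j\big)$, which is bounded (the rates are bounded and $H(k,\cdot)$ is continuous), so the integrand in the objective at $\lambda^*$ is bounded; combined with the lower bound \ref{boundAssump} on $L$, this makes all integrals and expectations above well-defined, and the remaining computations are the routine algebra sketched in the first two paragraphs.
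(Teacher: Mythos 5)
Your proof is correct and follows exactly the route the paper intends: the paper states this theorem with no written proof, only the phrase ``by using a standard verification argument,'' and your Dynkin-formula verification (upper bound from the definition of $H$, lower bound from a maximizing feedback selection whose existence is guaranteed by \ref{reached}) is precisely that argument, carried out in full. The care you take over the admissibility of $\lambda^*$ --- uniform compactness of the maximizers via the truncation argument of Proposition \ref{H}, measurable selection, and boundedness of $L$ along the optimal trajectory --- supplies exactly the details the paper leaves implicit, and nothing in it is amiss.
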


\begin{remark}
    The $\textrm{argmax}$ in Theorem \ref{theocontrol} is a singleton if the Hamiltonian functions $(H(i, \cdot))_i$ are differentiable (which is guaranteed if $(L(i, \cdot))_i$ are convex functions that are strictly convex on their respective domain).\\
\end{remark}

\section{Infinite-horizon problem: from the stationary to the ergodic case}
\label{stationary}
In this section we consider $r > 0$.\\

Our first goal is to obtain the convergence when $T \to +\infty$ of the above control problem towards the infinite-horizon / stationary control problem. Then, our second and main goal is to state what happens when $r \to 0$.  Similar results can be found for instance in \cite{bcd} in the case of a continuous state space.\\

Let us first state the convergence result corresponding to the limit case $T \to +\infty$.\\

\begin{proposition}We have
$$\exists (u_i^r)_{i \in \mathcal{I}} \in \mathbb R^N, \forall (i,t) \in \mathcal I \times \mathbb{R}_+, \lim_{T \to +\infty} u^{T,r}_i(t) = u_i^r.$$
Furthermore, $(u_i^r)_{i \in \mathcal{I}}$ satisfies the following Bellman equation:
\begin{equation}\label{HJstat}
  - r u^r_i + H\left(i, \left(u_j^{r} - u_i^{r}\right)_{j \in \mathcal{V}(i)}\right) = 0,\quad  \forall i \in \mathcal I.
\end{equation}
\end{proposition}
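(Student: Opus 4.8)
The plan is to argue entirely at the level of the ODE system \eqref{mainEQH}, using three ingredients: time-homogeneity, $T$-independent a priori bounds, and a contraction estimate produced by the discount $r>0$. First I would record that the problem is time-homogeneous: the substitution $s \mapsto s-t$ in the definition of $u^{T,r}_i(t)$ gives $u^{T,r}_i(t) = u^{T-t,r}_i(0)$, so by Theorem \ref{theocontrol} and uniqueness in Theorem \ref{global} one has $V^{T_2,r}_i(t+T_2-T_1) = V^{T_1,r}_i(t)$ for $0 \le t \le T_1 \le T_2$; in particular $V^{T_1+T_2,r}_i(T_2) = V^{T_1,r}_i(0)$. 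Hence it suffices to prove that $\big(V^{S,r}_i(0)\big)_{S>0}$ converges as $S \to +\infty$. Next I would establish bounds independent of $T$: since $H(i,0)$ is finite for each $i$ (by \ref{finiteAssump} and \ref{boundAssump}), the constant functions $t \mapsto a_-$ and $t \mapsto a_+$ with $a_- = \min\!\big(\min_i g(i),\, \tfrac1r\min_i H(i,0)\big)$ and $a_+ = \max\!\big(\max_i g(i),\, \tfrac1r\max_i H(i,0)\big)$ are, respectively, a subsolution and a supersolution of \eqref{mainEQH} compatible with \eqref{terminal condition}, so Proposition \ref{CompPrinc} gives $a_- \le V^{T,r}_i(t) \le a_+$ for all $i$, all $t \le T$, and all $T$.

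The heart of the argument is a stability estimate in the terminal datum. Writing $V^{T,r}[\phi]$ for the solution of \eqref{mainEQH} with terminal value $\phi$ at $T$: (i) for a constant $c$, the function $V^{T,r}_i[\phi](t) + c\,e^{-r(T-t)}$ solves \eqref{mainEQH} with terminal datum $\phi+c$ since the added constant cancels inside $H$, whence $V^{T,r}_i[\phi+c](t) = V^{T,r}_i[\phi](t) + c\,e^{-r(T-t)}$; (ii) $\phi \le \psi$ implies $V^{T,r}[\phi] \le V^{T,r}[\psi]$ by Proposition \ref{CompPrinc}. Combining (i) and (ii) yields
\[
  \big\|V^{T,r}[\phi](t) - V^{T,r}[\psi](t)\big\|_\infty \le e^{-r(T-t)}\,\|\phi-\psi\|_\infty .
\]
Now for $T_1,T_2>0$, time-homogeneity shows that on $[0,T_2]$ the functions $V^{T_1+T_2,r}[g]$ and $V^{T_2,r}\big[V^{T_1,r}[g](0)\big]$ solve \eqref{mainEQH} with the same value at time $T_2$, hence coincide there by uniqueness. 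Evaluating at $t=0$ and using the estimate above together with the bounds $a_- \le V^{T_1,r}_i(0) \le a_+$ and $a_- \le g(i) \le a_+$,
\[
  \big\|V^{T_1+T_2,r}[g](0) - V^{T_2,r}[g](0)\big\|_\infty \le e^{-rT_2}\,\big\|V^{T_1,r}[g](0) - g\big\|_\infty \le (a_+-a_-)\,e^{-rT_2}.
\]
Because $r>0$, this makes $\big(V^{S,r}_i(0)\big)_S$ Cauchy as $S\to+\infty$; a limit $u^r_i$ therefore exists, and by time-homogeneity $\lim_{T\to+\infty}u^{T,r}_i(t) = \lim_{T\to+\infty}V^{T-t,r}_i(0) = u^r_i$ for every $t\ge0$.

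It remains to identify the limit. I would set $w_i(\sigma) := V^{\sigma,r}_i(0)$; by time-homogeneity $w_i(\sigma) = V^{T,r}_i(T-\sigma)$ for any $T\ge\sigma$, so $w$ is $C^1$ and, differentiating \eqref{mainEQH} through this time reversal, $w$ solves the autonomous forward system
\[
  w_i'(\sigma) = -r\,w_i(\sigma) + H\!\big(i,(w_j(\sigma)-w_i(\sigma))_{j\in\mathcal V(i)}\big), \qquad w_i(0)=g(i),
\]
on $[0,+\infty)$. Since $w_i(\sigma)\to u^r_i$ and the right-hand side is continuous in $w$, $w_i'(\sigma)$ converges to $-r\,u^r_i + H\big(i,(u^r_j-u^r_i)_{j\in\mathcal V(i)}\big)$; if this limit were nonzero for some $i$, then $w_i$ would be unbounded, a contradiction. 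Hence $-r\,u^r_i + H\big(i,(u^r_j-u^r_i)_{j\in\mathcal V(i)}\big) = 0$ for all $i$, which is \eqref{HJstat}.

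I expect the contraction estimate of the second paragraph to be the only delicate point: the identity $V^{T_1+T_2,r}[g] = V^{T_2,r}\big[V^{T_1,r}[g](0)\big]$ on $[0,T_2]$ (a dynamic-programming/restart identity) together with the exponential factor $e^{-rT_2}$ is exactly where the positivity of $r$ enters, and it is what forces the existence of the limit (and, via a further comparison argument, would also give its uniqueness). The $T$-independent bounds and the passage to the limit in the equation are then straightforward consequences of Proposition \ref{CompPrinc} and of the autonomous structure of \eqref{mainEQH}.
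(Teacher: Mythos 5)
Your argument is correct, but it follows a genuinely different route from the paper's. The paper proceeds probabilistically: it \emph{defines} $u^r_i$ as the value of the infinite-horizon discounted control problem and proves $u^{T,r}_i(0)\to u^r_i$ by a two-sided comparison of controls (extending a finite-horizon optimal control by the constant intensities of \ref{finiteAssump} to get $\limsup_T u^{T,r}_i(0)\le u^r_i$, and truncating an $\varepsilon$-optimal infinite-horizon control to get the reverse inequality), before passing to the limit in Eq.~\eqref{mainEQH} exactly as you do to obtain Eq.~\eqref{HJstat}. You instead work entirely at the ODE level: the identity $V^{T,r}[\phi+c]=V^{T,r}[\phi]+c\,e^{-r(T-\cdot)}$ combined with the monotonicity of Proposition~\ref{CompPrinc} gives the nonexpansiveness estimate $\|V^{T,r}[\phi](t)-V^{T,r}[\psi](t)\|_\infty\le e^{-r(T-t)}\|\phi-\psi\|_\infty$, and the restart identity then makes $S\mapsto V^{S,r}(0)$ Cauchy; all steps check out (the constant sub/supersolutions are valid since $H(i,0)=-\inf L(i,\cdot)$ is finite by \ref{finiteAssump} and \ref{boundAssump}, and the restart identity follows from uniqueness in Theorem~\ref{global} plus time-homogeneity). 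What your approach buys is a self-contained, purely analytic proof with an explicit exponential rate of convergence, which makes transparent that positivity of $r$ is the only mechanism at work. What it does not deliver is the stochastic-control characterization of the limit, namely $u^r_i=\sup_{\lambda\in\mathcal A^\infty_0}\mathbb E[-\int_0^\infty e^{-rt}L(\cdot)\,dt]$, which the paper gets for free from its construction and which is used essentially in the proof of Lemma~\ref{lemmaboundedr} (both the bound on $ru^r_i$ and the bound on $u^r_j-u^r_i$ via the hitting times $\tau^{ij}$); if your proof were adopted, that characterization would have to be established separately by a verification argument before Section~\ref{stationary} can proceed.
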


\begin{proof}
Let us define $$u_i^r = \sup_{\lambda \in \mathcal{A}^{\infty}_0} \mathbb{E}\left[ -\int_0^{+\infty} e^{-rt} L\left(X^{0,i,\lambda}_t, \left(\lambda_t\left(X^{0,i,\lambda}_t, j\right)\right)_{j \in \mathcal{V}\left(X^{0,i,\lambda}_t\right)}\right)dt\right], \quad \forall i  \in \mathcal I,$$
where \begin{eqnarray*}
    \mathcal{A}^\infty_t &=& \left\{\left(\lambda_s(i, j)\right)_{s \in [t, +\infty), i \in \mathcal{I}, j \in \mathcal{V}(i)} \text{ non-negative, deterministic}|\right.\\
                        && \left. \vphantom{\left(\lambda_s(i, j)\right)_{s \in [t, +\infty), i \in \mathcal{I}, j \in \mathcal{V}(i)}}  \forall i \in \mathcal{I}, \forall j \in \mathcal{V}(i), s \mapsto \lambda_s(i, j) \in L_{\textrm{loc}}^{1}(t, +\infty)\right\}.
\end{eqnarray*}

Let us consider an optimal control $\lambda^* \in \mathcal{A}_0^T$ over $[0,T]$ as in Theorem \ref{theocontrol}. We define a control $\lambda \in \mathcal{A}^\infty_0$ by $\lambda_t = \lambda^*_t$ for $t \in [0,T]$ and $(\lambda_t(i,j))_{i \in \mathcal I, j \in \mathcal{V}(i)} = \left(\tilde{\lambda}_{ij}\right)_{ i \in \mathcal I,  j \in \mathcal{V}(i)}$ for $t>T$, where $\tilde{\lambda}$ is as in \ref{finiteAssump}. We have for all $i \in \mathcal I$,
\begin{eqnarray*}
u^r_i &\ge& \mathbb{E}\left[ -\int_0^\infty e^{-rt} L\left(X^{0,i,\lambda}_t, \left(\lambda_t\left(X^{0,i,\lambda}_t, j\right)\right)_{j \in \mathcal{V}\left(X^{0,i,\lambda}_t\right)}\right)dt\right]\\
&\ge& \mathbb{E}\left[ -\int_0^T e^{-rt} L\left(X^{0,i,\lambda^*}_t, \left(\lambda^*_t\left(X^{0,i,\lambda^*}_t, j\right)\right)_{j \in \mathcal{V}\left(X^{0,i,\lambda^*}_t\right)}\right)dt\right]\\
&&{}+ \mathbb{E}\left[ -\int_T^\infty e^{-rt} L\left(X^{T,X^{0,i,\lambda^*}_T,\lambda}_t, \left(\lambda_t\left(X^{T,X^{0,i,\lambda^*}_T,\lambda}_t, j\right)\right)_{j \in \mathcal{V}\left(X^{T,X^{0,i,\lambda^*}_T,\lambda}_t\right)}\right)dt\right]\\
&\ge& u_i^{T,r}(0) - e^{-rT} g\left(X^{0,i,\lambda^*}_T\right)\\
&&\!\!\!{}+ e^{-rT}\mathbb{E}\left[ -\int_T^\infty e^{-r(t-T)} L\left(X^{T,X^{0,i,\lambda^*}_T,\lambda}_t, \left(\lambda_t\left(X^{T,X^{0,i,\lambda^*}_T,\lambda}_t, j\right)\right)_{j \in \mathcal{V}\left(X^{T,X^{0,i,\lambda^*}_T,\lambda}_t\right)}\right)dt\right].\\
\end{eqnarray*}
Given the definition of $\lambda$ over $(T,+\infty)$ there exists $C$ such that $$\forall t > T, \quad L\left(X^{T,X^{0,i,\lambda^*}_T,\lambda}_t, \left(\lambda_t\left(X^{T,X^{0,i,\lambda^*}_T,\lambda}_t, j\right)\right)_{j \in \mathcal{V}\left(X^{T,X^{0,i,\lambda^*}_T,\lambda}_t\right)}\right) \le C.$$ Therefore,
$$ u^r_i \ge u_i^{T,r}(0) - e^{-rT} g\left(X^{0,i,\lambda^*}_T\right) - e^{-rT} \frac{C}r,$$
hence $\limsup_{T \to +\infty} u_i^{T,r}(0) \le u^r_i$.\\

Let us consider $\varepsilon>0$. Let us consider $\lambda^\varepsilon \in \mathcal{A}^\infty_0$ such that
$$u^r_i - \varepsilon \le \mathbb{E}\left[ -\int_0^\infty e^{-rt} L\left(X^{0,i,\lambda^\varepsilon}_t, \left(\lambda^\varepsilon_t\left(X^{0,i,\lambda^\varepsilon}_t, j\right)\right)_{j \in \mathcal{V}\left(X^{0,i,\lambda^\varepsilon}_t\right)}\right)dt\right].$$
We have
\begin{eqnarray*}
u^r_i - \varepsilon &\le& \mathbb{E}\left[ -\int_0^T e^{-rt} L\left(X^{0,i,\lambda^\varepsilon}_t, \left(\lambda^\varepsilon_t\left(X^{0,i,\lambda^\varepsilon}_t, j\right)\right)_{j \in \mathcal{V}\left(X^{0,i,\lambda^\varepsilon}_t\right)}\right)dt\right]\\
&&{}+ \mathbb{E}\left[ -\int_T^\infty e^{-rt} L\left(X^{T, X^{0,i,\lambda^\varepsilon}_T ,\lambda^\varepsilon}_t, \left(\lambda^\varepsilon_t\left(X^{T, X^{0,i,\lambda^\varepsilon}_T,\lambda^\varepsilon}_t, j\right)\right)_{j \in \mathcal{V}\left(X^{T, X^{0,i,\lambda^\varepsilon}_T,\lambda^\varepsilon}_t\right)}\right)dt\right] \\
&\le& u_i^{T,r}(0) -  e^{-rT} g\left(X^{0,i,\lambda^\varepsilon}_T\right) + e^{-rT} \frac{\underline{C}}{r},
\end{eqnarray*}
where $\underline{C}$ is defined in \ref{boundAssump}.\\

Therefore $\liminf_{T \to +\infty} u_i^{T,r}(0) \ge u^r_i - \varepsilon.$\\

By sending $\varepsilon$ to $0$, we obtain $\forall i \in \mathcal{I}, \lim_{T \to +\infty} u_i^{T,r}(0) = u^r_i$.\\

Let us notice now that $$\forall i \in \mathcal I, \forall s,t \in \mathbb{R}_+, \forall T > t, u^{T+s,r}_i(t) = u^{T+s-t,r}_i(0) = V^{T,r}_i(t-s).$$

Therefore $\forall (i,t) \in \mathcal{I}\times\mathbb R_+, \lim_{T \to +\infty} u_i^{T,r}(t) = u^r_i = \lim_{s \to -\infty} V^{T,r}_i(s)$.\\

Using Eq. \eqref{mainEQH}, we see that if $\left(V^{T,r}_i\right)_{i \in \mathcal I}$ has a finite limit in $-\infty$, then so does $\frac{d}{dt}\left({V^{T,r}_i}\right)_{i \in \mathcal I}$. But, then, necessarily, the latter limit is equal to nought, as otherwise the former could not be finite. By passing to the limit in Eq. \eqref{mainEQH}, we obtain
\begin{equation*}
  - r u^r_i + H\left(i, \left(u_j^{r} - u_i^{r}\right)_{j \in \mathcal{V}(i)}\right) = 0,\quad  \forall i \in \mathcal I.
\end{equation*}
\end{proof}

Using a standard verification argument, we obtain a simpler characterization of the limit:\\

\begin{proposition}Let $\mathcal{A}$ be the set of non-negative families $(\lambda(i, j))_{i \in \mathcal{I}, j \in \mathcal{V}(i)}$. Then
$$u_i^r = \sup_{\lambda \in \mathcal{A}} \mathbb{E}\left[ -\int_0^{+\infty} e^{-rt} L\left(X^{0,i,\lambda}_t, \left(\lambda\left(X^{0,i,\lambda}_t, j\right)\right)_{j \in \mathcal{V}\left(X^{0,i,\lambda}_t\right)}\right)dt\right], \quad \forall i  \in \mathcal I.$$
\end{proposition}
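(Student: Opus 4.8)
The plan is to prove the identity by a verification argument, exploiting the fact that the stationary vector $(u^r_i)_{i\in\mathcal{I}}$ can be viewed as a time-independent solution of the finite-horizon Hamilton--Jacobi equation \eqref{mainEQH}. The first step is to observe that the constant function $\bar{V}:(i,t)\in\mathcal{I}\times(-\infty,T]\mapsto\bar{V}_i(t)=u^r_i$ satisfies $\frac{d}{dt}\bar{V}_i\equiv0$, so that Eq.~\eqref{HJstat} says precisely that $\bar{V}$ solves \eqref{mainEQH} on $(-\infty,T]$ with terminal datum $\bar{V}_i(T)=u^r_i$ for all $i\in\mathcal{I}$. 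By the uniqueness part of Theorem~\ref{global}, $\bar{V}$ is therefore \emph{the} solution of \eqref{mainEQH} associated with the terminal payoff $u^r$ in place of $g$. Running the verification argument underlying Theorem~\ref{theocontrol} with this terminal payoff then yields, for every $T>0$ and every $i\in\mathcal{I}$,
\begin{equation}\label{verifstat}
u^r_i=\sup_{\lambda\in\mathcal{A}^T_0}\mathbb{E}\left[-\int_0^T e^{-rt}L\left(X^{0,i,\lambda}_t,\left(\lambda_t\left(X^{0,i,\lambda}_t,j\right)\right)_{j\in\mathcal{V}\left(X^{0,i,\lambda}_t\right)}\right)dt+e^{-rT}u^r_{X^{0,i,\lambda}_T}\right],
\end{equation}
and, moreover, that an optimal feedback control is obtained, exactly as in Theorem~\ref{theocontrol}, by choosing at each node $i$ a maximizer of $(\lambda_{ij})_{j\in\mathcal{V}(i)}\mapsto\left(\sum_{j\in\mathcal{V}(i)}\lambda_{ij}(u^r_j-u^r_i)\right)-L(i,(\lambda_{ij})_{j\in\mathcal{V}(i)})$; such a maximizer exists by property~\ref{reached} in Proposition~\ref{H}, it has finite cost since $H(i,\cdot)$ is finite, and since $u^r$ does not depend on time it can be chosen time-independent, hence as an element $\lambda^*$ of $\mathcal{A}$.

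From \eqref{verifstat} I would then obtain the two inequalities by letting $T\to+\infty$ (note that for $\lambda\in\mathcal{A}$ the rates are constant and finite, so $X^{0,i,\lambda}$ is well defined and non-explosive on $[0,+\infty)$, and its restriction to $[0,T]$ is an admissible control in $\mathcal{A}^T_0$). For one direction, \eqref{verifstat} applied to such a $\lambda$ gives $u^r_i\ge\mathbb{E}\left[-\int_0^T e^{-rt}L\left(X^{0,i,\lambda}_t,\left(\lambda\left(X^{0,i,\lambda}_t,j\right)\right)_{j\in\mathcal{V}(X^{0,i,\lambda}_t)}\right)dt\right]+\mathbb{E}\left[e^{-rT}u^r_{X^{0,i,\lambda}_T}\right]$, and letting $T\to+\infty$ (justified below) yields $u^r_i\ge\mathbb{E}\left[-\int_0^{+\infty} e^{-rt}L\left(X^{0,i,\lambda}_t,\left(\lambda\left(X^{0,i,\lambda}_t,j\right)\right)_{j\in\mathcal{V}(X^{0,i,\lambda}_t)}\right)dt\right]$; taking the supremum over $\lambda\in\mathcal{A}$ then gives the inequality $\ge$ in the statement. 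For the reverse inequality I would take $\lambda=\lambda^*\in\mathcal{A}$, the time-independent optimal feedback found above: then \eqref{verifstat} holds with equality, and the same passage to the limit gives $u^r_i=\mathbb{E}\left[-\int_0^{+\infty}e^{-rt}L\left(X^{0,i,\lambda^*}_t,\left(\lambda^*\left(X^{0,i,\lambda^*}_t,j\right)\right)_{j\in\mathcal{V}(X^{0,i,\lambda^*}_t)}\right)dt\right]$, which is dominated by the supremum in the statement. Combining the two inequalities gives the claimed identity.

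The one point that requires care -- and the main obstacle -- is the interchange of limit and expectation as $T\to+\infty$, since along a trajectory $L$ need not be integrable, being only bounded below by \ref{boundAssump}. This is handled as follows. Since $r>0$ and $(u^r_i)_{i\in\mathcal{I}}\in\mathbb{R}^N$ is a finite vector, $\left|e^{-rT}u^r_{X^{0,i,\lambda}_T}\right|\le e^{-rT}\max_{i\in\mathcal{I}}|u^r_i|\to0$, so the terminal term disappears. For the running term, write $L=\underline{C}+(L-\underline{C})$ with $\underline{C}$ as in \ref{boundAssump}; the integrand $t\mapsto e^{-rt}\left(L(X^{0,i,\lambda}_t,\cdot)-\underline{C}\right)$ is then non-negative, so $T\mapsto\int_0^T e^{-rt}(L-\underline{C})\,dt$ is non-decreasing, and the monotone convergence theorem -- applied pathwise and then under $\mathbb{E}$ -- gives the convergence of $\mathbb{E}\left[\int_0^T e^{-rt}(L-\underline{C})\,dt\right]$ in $[0,+\infty]$. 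Adding back $\int_0^T e^{-rt}\underline{C}\,dt\to\underline{C}/r$ yields the convergence of $\mathbb{E}\left[-\int_0^T e^{-rt}L\,dt\right]$, with limit in $[-\infty,\max(0,-\underline{C})/r]$. Everything else is exactly the verification computation already carried out for Theorem~\ref{theocontrol}.
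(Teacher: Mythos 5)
Your argument is correct and is exactly the ``standard verification argument'' that the paper invokes without writing out: you extend $u^r$ to the constant-in-time solution of Eq.~\eqref{mainEQH} with terminal datum $u^r$, apply the finite-horizon verification result of Theorem~\ref{theocontrol} with $g$ replaced by $u^r$, and pass to the limit $T\to+\infty$ using the vanishing of the discounted terminal term and monotone convergence on $L-\underline{C}\ge 0$. The attention you pay to the possible non-integrability of $L$ (for the inequality $\ge$) and to the finiteness of $L$ at the maximizer $\lambda^*$ (for the reverse inequality) fills in precisely the points the paper leaves implicit.
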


We now come to the study of the limit case $r \to 0$, which, as we shall see, corresponds to the convergence towards the ergodic problem. We start with the following lemma:\\

\begin{lemma}
\label{lemmaboundedr}
We have:
\begin{enumerate}[label=(\roman*)]
    \item $\forall i \in \mathcal{I}$, $r \in \mathbb{R}_+^* \mapsto ru_i^r$ is bounded;
    \item $\forall i \in \mathcal{I}$, $\forall j \in \mathcal{V}(i)$, $r \in \mathbb{R}_+^* \mapsto u_j^r - u_i^r$ is bounded.
\end{enumerate}
\end{lemma}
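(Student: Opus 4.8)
The plan is to prove (i) by elementary two‑sided bounds on the cost functional and (ii) by propagating a bound through the stationary Bellman equation \eqref{HJstat} along a shortest directed path, using that $\mathcal{G}$ is (strongly) connected. Using \ref{finiteAssump}, fix a family $\hat\lambda = (\hat\lambda_{ij})_{i \in \mathcal{I},\, j \in \mathcal{V}(i)}$ of strictly positive reals with $\ell_i := L\big(i, (\hat\lambda_{ij})_{j \in \mathcal{V}(i)}\big) < +\infty$ for every $i$, and set $\hat\lambda_{\min} := \min_{i}\min_{j \in \mathcal{V}(i)}\hat\lambda_{ij} > 0$, $\bar L := \max_{i}\ell_i$, and $\mu_{\max} := \max_{i}\sum_{j \in \mathcal{V}(i)}\hat\lambda_{ij}$, all finite (I assume $N \ge 2$, the case $N = 1$ being trivial since then $\mathcal{V}(1) = \emptyset$). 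For (i): since $L \ge \underline{C}$ by \ref{boundAssump}, every admissible control yields a cost functional bounded above by $-\underline{C}\int_0^{+\infty}e^{-rt}\,dt = -\underline{C}/r$, so $u^r_i \le -\underline{C}/r$ and $r u^r_i \le -\underline{C}$; for the lower bound I would use, in the representation $u^r_i = \sup_{\lambda \in \mathcal{A}}\mathbb{E}[\cdots]$ established just above, the time‑independent feedback $\lambda(v,\cdot) = (\hat\lambda_{vj})_{j \in \mathcal{V}(v)}$, whose running cost equals $\ell_{X_t} \le \bar L$ pathwise, giving $u^r_i \ge -\bar L/r$ and $r u^r_i \ge -\bar L$. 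Hence $-\bar L \le r u^r_i \le -\underline{C}$ for all $i$ and all $r > 0$; set $M_0 := \max(|\bar L|,|\underline{C}|)$, so that $|u^r_i| \le M_0/r$.

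For (ii), plugging the control $\hat\lambda$ into the supremum defining $H(i,\cdot)$ and invoking \eqref{HJstat} gives, for every $i$,
$$r u^r_i = H\big(i, (u^r_j - u^r_i)_{j \in \mathcal{V}(i)}\big) \ge \sum_{j \in \mathcal{V}(i)}\hat\lambda_{ij}(u^r_j - u^r_i) - \ell_i.$$
Let $i_* = i_*(r)$ and $i^* = i^*(r)$ be nodes at which $u^r$ attains its minimum and maximum respectively, and (by strong connectivity) pick a directed path $i_* = w_0 \to w_1 \to \cdots \to w_m = i^*$ with $m \le N-1$. I would then show by induction on $l$ that $u^r_{w_l} - u^r_{i_*} \le c_l$, where $c_0 = 0$ and $c_{l+1} = c_l + (M_0 + \bar L + \mu_{\max}c_l)/\hat\lambda_{\min}$: applying the displayed inequality at $w_l$, isolating the term indexed by $w_{l+1} \in \mathcal{V}(w_l)$, bounding every other term by $u^r_j - u^r_{w_l} = (u^r_j - u^r_{i_*}) - (u^r_{w_l} - u^r_{i_*}) \ge -c_l$ (using $u^r_j \ge u^r_{i_*}$ and the induction hypothesis), and using $r u^r_{w_l} \le M_0$, $\ell_{w_l} \le \bar L$, one gets $\hat\lambda_{w_l w_{l+1}}(u^r_{w_{l+1}} - u^r_{w_l}) \le M_0 + \bar L + \mu_{\max}c_l$, whence $u^r_{w_{l+1}} - u^r_{i_*} \le c_{l+1}$. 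The constants $c_l$ do not depend on $r$ and are nondecreasing, so the oscillation $\max_i u^r_i - \min_i u^r_i = u^r_{i^*} - u^r_{i_*} \le c_{N-1}$ uniformly in $r > 0$, and in particular $|u^r_j - u^r_i| \le c_{N-1}$ for every $i$ and every $j \in \mathcal{V}(i)$, which is (ii).

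The delicate point is precisely this uniformity in $r$: since $u^r_i$ itself blows up like $1/r$ as $r \to 0^+$, the estimate must be arranged so that the leading order cancels out of the differences $u^r_j - u^r_i$. A one‑shot version — bounding the terms of the displayed inequality that are off the chosen path by $-(\max_i u^r_i - \min_i u^r_i)$ and summing along the path — fails, because the resulting self‑referential inequality carries a coefficient in front of the oscillation that need not be $< 1$. Propagating the bound one edge at a time, with the sharper position‑dependent lower bound $-c_l$ on the off‑path terms, is what makes the (finite) geometric quantity $c_{N-1}$ appear rather than a circular estimate; this induction step is where I expect the bookkeeping to need the most care. A probabilistic variant — comparing $u^r_{i_*}$ with the admissible policy that runs the $\hat\lambda$‑dynamics up to a fixed time $S$ and then follows a stationary optimal feedback, so that the terminal term telescopes into $(1-\delta)(\max_i u^r_i - \min_i u^r_i)$ plus an $O(S)$ error with $\delta>0$ a lower bound on the $S$‑step transition probabilities of the $\hat\lambda$‑chain — yields the same oscillation bound for $r$ in any bounded range, the regime $r \to +\infty$ being absorbed by $|u^r_i| \le M_0/r$; but it additionally needs a verification argument for the stationary problem, so the purely analytic route above seems preferable.
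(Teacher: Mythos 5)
Your proposal is correct. Part (i) is essentially identical to the paper's argument: the upper bound $ru_i^r \le -\underline{C}$ from \ref{boundAssump} and the lower bound from playing the fixed admissible control of \ref{finiteAssump}. Part (ii), however, takes a genuinely different route. The paper works probabilistically: it fixes the positive intensities $\hat\lambda$ of \ref{finiteAssump}, notes that connectedness makes the hitting time $\tau^{ij}$ of node $j$ from node $i$ have finite expectation, and applies a dynamic programming inequality at $\tau^{ij}$ (after adding $\underline{C}/r$ to both sides so that all terms have a sign) to get $u_j^r - u_i^r \le \mathbb{E}\left[\tau^{ij}\right]\left(\sup_k L\left(k,(\hat\lambda_{kj})_j\right) - \underline{C}\right)$, uniformly in $r$. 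You instead stay entirely analytic: you test the stationary Bellman equation \eqref{HJstat} with the control $\hat\lambda$ to obtain $ru_i^r \ge \sum_{j}\hat\lambda_{ij}(u_j^r - u_i^r) - \ell_i$ and propagate the resulting one-edge estimates along a directed path from the argmin to the argmax, with an $r$-independent recursion $c_{l+1} = c_l + (M_0+\bar L+\mu_{\max}c_l)/\hat\lambda_{\min}$. Your induction step checks out (the off-path terms are correctly bounded below by $-c_l$ using that $i_*$ is the minimizer, and $M_0+\bar L \ge 0$ so the $c_l$ are nonnegative and nondecreasing), and your remark about why the naive one-shot oscillation estimate is circular is exactly the right diagnosis. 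What each approach buys: the paper's hitting-time argument is shorter on the page but leans on unproved probabilistic facts (finiteness of $\mathbb{E}[\tau^{ij}]$ for the irreducible chain, and a dynamic programming principle at a stopping time for the infinite-horizon problem), whereas your path-propagation argument is fully self-contained given \eqref{HJstat} and yields an explicit constant; both require interpreting ``connected'' as strongly connected, which is also implicit in the paper's use of $\tau^{ij}$ for all ordered pairs $(i,j)$.
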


\begin{proof}
\begin{enumerate}[label=(\roman*)]
\item Let us choose $(\lambda(i,j))_{i \in \mathcal{I}, j \in \mathcal{V}(i)} \in \mathcal{A}$ as in assumption \ref{finiteAssump}. By definition of $u^r_i$ we have
\begin{eqnarray*}
    u_i^r &\ge& \mathbb{E}\left[ -\int_0^{+\infty} e^{-rt} L\left(X^{0,i,\lambda}_t, \left(\lambda\left(X^{0,i,\lambda}_t, j\right)\right)_{j \in \mathcal{V}\left(X^{0,i,\lambda}_t\right)}\right)dt\right]\\
    & \ge& \int_0^{+\infty} e^{-rt} \inf_k -L\left(k, \left(\lambda(k,j)\right)_{j \in \mathcal{V}(k)}\right)dt\\
    &\ge& \frac{1}{r} \inf_k -L\left(k, \left(\lambda(k,j)\right)_{j \in \mathcal{V}(k)}\right).
\end{eqnarray*}
From the boundedness assumption of the functions $(L(i,\cdot))_{i \in \mathcal{I}}$ (see
\ref{boundAssump}), we also have for all $(\lambda(i,j))_{i \in \mathcal{I}, j \in \mathcal{V}(i)} \in \mathcal{A}$ that
$$\mathbb{E}\left[ -\int_0^{+\infty} e^{-rt} L\left(X^{0,i,\lambda}_t, \left(\lambda\left(X^{0,i,\lambda}_t, j\right)\right)_{j \in \mathcal{V}\left(X^{0,i,\lambda}_t\right)}\right)dt\right]  \le  - \underline{C} \int_0^{+\infty} e^{-rt}  dt  = -\frac{\underline{C}}{r}.$$
Therefore, $u_i^r\le -\frac{\underline{C}}{r}$.\\

We conclude that $r \mapsto ru_i^r$ is bounded.\\

\item Let us consider a family of positive intensities $\left(\lambda(i,j)\right)_{i \in \mathcal{I}, j \in \mathcal{V}(i)} \in \mathcal{A}$ as in assumption \ref{finiteAssump}. Because $\mathcal G$  is connected, the positiveness of the above intensities implies that for all $(i,j) \in \mathcal{I}^2$ the stopping time defined by $\tau^{ij} = \inf\left\{t\Big|X^{0,i,\lambda}_t = j \right\}$ verifies $\mathbb{E}\left[\tau^{ij}\right] < +\infty$.\\

Now, $\forall (i,j) \in \mathcal{I}^2$, we have
\begin{eqnarray*}
     &&u_i^r + \frac{\underline{C}}{r}\\
      &\ge& \mathbb{E}\left[\int_0^{\tau^{ij}} e^{-rt} \left(-L\left(X^{0,i,\lambda}_t, \left(\lambda\left(X^{0,i,\lambda}_t, j\right)\right)_{j \in \mathcal{V}\left(X^{0,i,\lambda}_t\right)}\right) + \underline{C}\right) dt + e^{-r\tau^{ij}}\left(u_j^r + \frac{\underline{C}}{r} \right) \right] \\
    & \ge& \mathbb{E}\left[\int_0^{\tau^{ij}} e^{-rt}dt \right]\left( \inf_k -L\left(k, \left(\lambda(k,j)\right)_{j \in \mathcal{V}(k)} \right) + \underline{C}\right) + \mathbb{E}\left[e^{-r\tau^{ij}}\right] \left(u_j^r + \frac{\underline{C}}{r}\right) \\
    & \ge& \mathbb{E}\left[\tau^{ij}\right]\left(\inf_k -L\left(k, \left(\lambda(k,j)\right)_{j \in \mathcal{V}(k)}\right) + \underline{C} \right)  + u_j^r + \frac{\underline{C}}{r}.\\
\end{eqnarray*}
Therefore,
$$u_j^r - u_i^r \le -\mathbb{E}\left[\tau^{ij}\right]\left(\inf_k -L\left(k, \left(\lambda(k,j)\right)_{j \in \mathcal{V}(k)}\right) + \underline{C}\right).$$
Therefore $r \mapsto u_j^r - u_i^r$ is bounded from above. Reverting the role of $i$ and $j$ we obtain the boundedness from below, hence the result.
\end{enumerate}
\end{proof}

We now state two lemmas that will also be useful to study the limit case $r \to 0$.\\

\begin{lemma}
\label{comparevaluefunc}
Let $\varepsilon>0$. Let $(v_i)_{i \in \mathcal{I}}$ and $(w_i)_{i \in \mathcal{I}}$ be such that
\begin{align*}
     -\varepsilon v_i + H\left(i, \left(v_j - v_i\right)_{j \in \mathcal{V}(i)}\right) \ge -\varepsilon w_i + H\left(i, \left(w_j - w_i\right)_{j \in \mathcal{V}(i)}\right), \quad \forall i \in \mathcal{I}.
\end{align*}
Then $\forall i \in \mathcal I, v_i \le w_i$.\\
\end{lemma}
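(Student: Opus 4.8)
The statement is a "static" comparison principle, mirroring the time-dependent Proposition \ref{CompPrinc} but with the discount rate replaced by an arbitrary $\varepsilon > 0$ and with no time variable. My plan is to replay the maximum-principle argument used in Proposition \ref{CompPrinc}, but now over the finite set $\mathcal{I}$ alone. First I would consider the quantity $v_i - w_i$ and pick $i^* \in \mathcal{I}$ maximizing it (such a maximizer exists since $\mathcal{I}$ is finite). The goal is to show that $v_{i^*} - w_{i^*} \le 0$, which immediately gives $v_i \le w_i$ for all $i$.

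The key step is to exploit the monotonicity property \ref{Hnd} of $H$ at the node $i^*$. By definition of $i^*$, for every $j \in \mathcal{V}(i^*)$ we have $v_j - w_j \le v_{i^*} - w_{i^*}$, hence $v_j - v_{i^*} \le w_j - w_{i^*}$. Applying \ref{Hnd} coordinate by coordinate yields
\[
H\left(i^*, \left(v_j - v_{i^*}\right)_{j \in \mathcal{V}(i^*)}\right) \le H\left(i^*, \left(w_j - w_{i^*}\right)_{j \in \mathcal{V}(i^*)}\right).
\]
Combining this with the hypothesis of the lemma evaluated at $i = i^*$, namely
\[
-\varepsilon v_{i^*} + H\left(i^*, \left(v_j - v_{i^*}\right)_{j \in \mathcal{V}(i^*)}\right) \ge -\varepsilon w_{i^*} + H\left(i^*, \left(w_j - w_{i^*}\right)_{j \in \mathcal{V}(i^*)}\right),
\]
the two Hamiltonian terms can be cancelled against each other (the left-hand Hamiltonian is $\le$ the right-hand one, yet the full left side is $\ge$ the full right side), leaving $-\varepsilon v_{i^*} \ge -\varepsilon w_{i^*}$, i.e. $\varepsilon(v_{i^*} - w_{i^*}) \le 0$. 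Since $\varepsilon > 0$, this gives $v_{i^*} - w_{i^*} \le 0$, and therefore $v_i \le w_i$ for all $i \in \mathcal{I}$.

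I do not expect any real obstacle here: there is no differential term, so no need for the auxiliary perturbation $\varepsilon(T-t)$ or the discount change of variables $e^{-rt}$ used in the time-dependent case; the positivity of $\varepsilon$ plays exactly the role that the penalty term played before, ruling out the degenerate "interior maximum" scenario. The only point requiring a word of care is making sure the inequality $H(i^*, (v_j - v_{i^*})_j) \le H(i^*, (w_j - w_{i^*})_j)$ is combined in the correct direction with the lemma's hypothesis so that the Hamiltonian contributions drop out with the right sign; this is a routine chain of inequalities.
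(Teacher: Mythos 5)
Your proposal is correct and follows essentially the same argument as the paper: maximize $v_i - w_i$ over the finite set $\mathcal{I}$, use the monotonicity property \ref{Hnd} at the maximizer to compare the Hamiltonian terms, and cancel them against the hypothesis to conclude $\varepsilon(v_{i^*}-w_{i^*})\le 0$. No gaps.
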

\begin{proof}
Let us consider $(z_i)_{i \in \mathcal I} = (v_i - w_i)_{i \in \mathcal I}$. Let us choose $i^* \in \mathcal{I}$ such that $z_{i^*} = \max_{i \in \mathcal{I}} z_i$.\\

By definition of $i^*$, we know that $\forall j \in \mathcal{V}(i^*), v_{i^*} - w_{i^*} \ge v_{j} - w_{j}$. So, $\forall j \in \mathcal{V}(i^*), v_{j} - v_{i^*} \le w_{j} - w_{i^*}$, and therefore by \ref{Hnd}
$$H\left(i^*, \left(v_j - v_{i^*}\right)_{j \in \mathcal{V}({i^*})}\right) \le H\left(i^*, \left(w_j - w_{i^*}\right)_{j \in \mathcal{V}(i^*)}\right).$$

By definition of $(w_i)_{i \in \mathcal{I}}$ and $(v_i)_{i \in \mathcal{I}}$, we have therefore $ \varepsilon (v_{i^*} - w_{i^*}) \le  0$.\\

We conclude that $\forall i \in \mathcal I, v_i  - w_i \le v_{i^*} - w_{i^*} \le 0.$\\
\end{proof}

\begin{lemma}
\label{gammaunique}
Let $\eta, \mu \in \mathbb R$. Let $(v_i)_{i \in \mathcal{I}}$ and $(w_i)_{i \in \mathcal{I}}$ be such that
\begin{align*}
    & -\eta + H\left(i, \left(v_j - v_i\right)_{j \in \mathcal{V}(i)}\right) = 0,\quad  \forall i \in \mathcal{I}, \\
    & -\mu + H\left(i, \left(w_j - w_i\right)_{j \in \mathcal{V}(i)}\right) = 0,\quad  \forall i \in \mathcal{I}.
\end{align*}
Then $\eta = \mu$.
\end{lemma}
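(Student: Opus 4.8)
The plan is to prove uniqueness of the ergodic constant by a doubling-type argument, comparing the two systems node by node and exploiting the maximum principle already used in Lemma \ref{comparevaluefunc}. Suppose, towards a contradiction, that $\eta \neq \mu$; without loss of generality assume $\eta > \mu$. The idea is to pick the node $i^*$ where $v_i - w_i$ is maximal over $\mathcal{I}$ (the graph is finite so the max is attained). At $i^*$ we have $v_{i^*} - w_{i^*} \ge v_j - w_j$ for all $j \in \mathcal{V}(i^*)$, hence $v_j - v_{i^*} \le w_j - w_{i^*}$ coordinate-wise, and by the monotonicity property \ref{Hnd},
\[
H\!\left(i^*, \left(v_j - v_{i^*}\right)_{j \in \mathcal{V}(i^*)}\right) \le H\!\left(i^*, \left(w_j - w_{i^*}\right)_{j \in \mathcal{V}(i^*)}\right).
\]
But the first equation gives $H(i^*, (v_j - v_{i^*})_j) = \eta$ and the second gives $H(i^*, (w_j - w_{i^*})_j) = \mu$, so we would get $\eta \le \mu$, contradicting $\eta > \mu$. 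Reversing the roles of the two systems rules out $\eta < \mu$ as well, so $\eta = \mu$.

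The main point to be careful about is that this argument is completely self-contained: it does not use convexity of $H$, only the non-decreasing property \ref{Hnd}, exactly as in Lemma \ref{comparevaluefunc}. In fact the present lemma can be seen as a degenerate ($\varepsilon \to 0$) version of Lemma \ref{comparevaluefunc}: in that lemma the term $\varepsilon(v_{i^*} - w_{i^*})$ forced $v_{i^*} \le w_{i^*}$, whereas here there is no $v$-dependent zeroth-order term and instead the mismatch is absorbed into the constants $\eta$ and $\mu$, which is why one obtains equality of the constants rather than an ordering of the functions. No obstacle of substance arises; the only thing to state explicitly is that $\mathcal{I}$ being finite guarantees the maximizing node $i^*$ exists, and that $\mathcal{V}(i^*)$ may be empty, in which case $H(i^*, \cdot)$ is a constant (a supremum over a single empty sum minus $L$) and the identity $\eta = \mu$ at such a node is immediate — but connectedness of $\mathcal{G}$ guarantees every node has some in- or out-neighbour, so at worst one handles the empty-neighbourhood case trivially and otherwise applies the inequality above.

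Concretely, I would write the proof as: introduce $z_i = v_i - w_i$, select $i^* \in \operatorname{argmax}_{i} z_i$, derive the coordinatewise inequality $v_j - v_{i^*} \le w_j - w_{i^*}$ for $j \in \mathcal{V}(i^*)$ from the maximality of $z_{i^*}$, apply \ref{Hnd} to get $H(i^*,(v_j-v_{i^*})_j) \le H(i^*,(w_j-w_{i^*})_j)$, substitute the two defining equations to obtain $\eta \le \mu$, and finally swap $(v,\eta)$ with $(w,\mu)$ to get $\mu \le \eta$, concluding $\eta = \mu$.
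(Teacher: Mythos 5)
Your argument is correct, and it is genuinely more direct than the one in the paper. You evaluate both equations at a node $i^*$ maximizing $v_i - w_i$, use the coordinatewise inequality $v_j - v_{i^*} \le w_j - w_{i^*}$ together with \ref{Hnd} to get $\eta = H\bigl(i^*, (v_j - v_{i^*})_{j\in\mathcal{V}(i^*)}\bigr) \le H\bigl(i^*, (w_j - w_{i^*})_{j\in\mathcal{V}(i^*)}\bigr) = \mu$, and conclude by symmetry. The paper instead argues by contradiction: assuming $\eta > \mu$, it introduces $C = \sup_i(w_i - v_i) + 1$ and a small $\varepsilon = (\eta-\mu)/(\sup_i(w_i-v_i) - \inf_i(w_i-v_i) + 1)$ so that $0 \le \varepsilon(v_i - w_i + C) \le \eta - \mu$, rewrites the two ergodic equations as a pair of inequalities for the discounted operator $-\varepsilon\,(\cdot)_i + H(i,\cdot)$, and invokes Lemma \ref{comparevaluefunc} to force $v_i + C \le w_i$, contradicting the choice of $C$. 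Both proofs ultimately rest on the same maximum-principle mechanism (the paper's is hidden inside Lemma \ref{comparevaluefunc}, which itself selects the argmax of $v-w$); yours simply applies it once, directly, without the $\varepsilon$-perturbation, and so is shorter and arguably cleaner. What the paper's route buys is uniformity of method --- it reuses the comparison lemma that is already needed elsewhere in Section 4 --- but nothing in your argument is missing: you correctly identify that only finiteness of $\mathcal{I}$ and the non-decreasing property \ref{Hnd} are used, and the empty-neighbourhood case you mention is indeed trivial since then both equations at $i^*$ read $\eta = H(i^*) = \mu$ outright.
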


\begin{proof}
If $\eta > \mu$, then let us consider
$$C = \sup_{i \in \mathcal I} (w_i - v_i) + 1 \quad \text{and} \quad \varepsilon = \frac{\eta - \mu}{\sup_{i \in \mathcal I} (w_i - v_i) - \inf_{i \in \mathcal I} (w_i - v_i) + 1}.$$
From these definitions, we have
$$\forall i \in \mathcal{I}, \quad v_i + C > w_i \quad \text{and} \quad 0 \le \varepsilon (v_i - w_i + C) \le \eta - \mu.$$

We obtain
$$\varepsilon (v_i - w_i + C) \le H\left(i, \left(v_j - v_i\right)_{j \in \mathcal{V}(i)}\right) - H\left(i, \left(w_j - w_i\right)_{j \in \mathcal{V}(i)}\right),$$
and therefore
$$-\varepsilon w_i +  H\left(i, \left(w_j - w_i\right)_{j \in \mathcal{V}(i)}\right) \le - \varepsilon (v_i + C) + H\left(i, \left(\left(v_j + C\right) - \left(v_i+C\right)\right)_{j \in \mathcal{V}(i)}\right).$$

From Lemma \ref{comparevaluefunc} it follows that $\forall i \in \mathcal I, v_i + C \le w_i$, in contradiction with the definition of $C$. Therefore $\eta \le \mu$, and by reverting the role of $\eta$ and $\mu$ we obtain $\eta = \mu$.\\
\end{proof}

We can now prove the main result on the convergence of the stationary problem towards the ergodic one.\\

\begin{proposition}
\label{gammav}
We have:\\
\begin{itemize}
\item $\exists \gamma \in \mathbb R, \forall i \in \mathcal{I}$, $\lim_{r \to 0} ru^r_i = \gamma$.\\
\item There exists a sequence $(r_n)_{n \in \mathbb N}$ converging towards $0$ such that $\forall i \in \mathcal{I}, \left(u_i^{r_n} - u_1^{r_n}\right)_{n \in \mathbb N}$ is convergent. \\
\item For all $i \in \mathcal I$, if $\xi_i = \lim_{n \to +\infty} u_i^{r_n} - u_1^{r_n}$, then we have
\begin{equation}
\label{ergodic}
- \gamma + H\left(i, \left(\xi_j - \xi_i\right)_{j \in \mathcal{V}(i)}\right) = 0.
\end{equation}
\end{itemize}
\end{proposition}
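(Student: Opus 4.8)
The plan is to exploit the two a priori bounds from Lemma \ref{lemmaboundedr} together with the uniqueness Lemma \ref{gammaunique} and the comparison-type Lemma \ref{comparevaluefunc}. By part (i) of Lemma \ref{lemmaboundedr}, the family $(ru_i^r)_{r > 0}$ is bounded for each $i$, and by part (ii) the family $(u_j^r - u_i^r)_{r>0}$ is bounded. First I would fix an arbitrary sequence $r_n \to 0$; since $\mathcal{I}$ is finite, a diagonal extraction gives a subsequence (not relabeled) along which $r_n u_i^{r_n}$ converges to some limit $\gamma_i \in \mathbb{R}$ for every $i$, and along which $u_i^{r_n} - u_1^{r_n}$ converges to some $\xi_i \in \mathbb{R}$ for every $i$ (with $\xi_1 = 0$).

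Next I would identify $\gamma_i$ and pass to the limit in the stationary Bellman equation \eqref{HJstat}. Write \eqref{HJstat} as $r_n u_i^{r_n} = H\left(i, \left(u_j^{r_n} - u_i^{r_n}\right)_{j \in \mathcal{V}(i)}\right)$ and note that $u_j^{r_n} - u_i^{r_n} = (u_j^{r_n} - u_1^{r_n}) - (u_i^{r_n} - u_1^{r_n}) \to \xi_j - \xi_i$. Since each $H(i,\cdot)$ is continuous (being convex and finite on $\mathbb{R}^{|\mathcal{V}(i)|}$ by \ref{Hconvex}), the right-hand side converges to $H\left(i, \left(\xi_j - \xi_i\right)_{j \in \mathcal{V}(i)}\right)$. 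Hence $\gamma_i = \lim_n r_n u_i^{r_n} = H\left(i, \left(\xi_j - \xi_i\right)_{j \in \mathcal{V}(i)}\right)$ for every $i$, which is exactly equation \eqref{ergodic} with $\gamma$ replaced by $\gamma_i$; it remains to show $\gamma_i$ does not depend on $i$ and does not depend on the subsequence.

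For the independence on $i$: observe that $r_n u_i^{r_n} - r_n u_1^{r_n} = r_n (u_i^{r_n} - u_1^{r_n}) \to 0 \cdot \xi_i = 0$ since $u_i^{r_n} - u_1^{r_n}$ stays bounded; therefore $\gamma_i = \gamma_1$ for all $i$, and we may call this common value $\gamma$ (for the moment, $\gamma$ depends on the subsequence). So $(\xi_i)_{i \in \mathcal{I}}$ solves $-\gamma + H\left(i, \left(\xi_j - \xi_i\right)_{j \in \mathcal{V}(i)}\right) = 0$ for all $i$, which is precisely of the form treated in Lemma \ref{gammaunique}. For the independence on the subsequence: take any other sequence $r'_m \to 0$; the same extraction produces a limit $\gamma'$ and a solution $(\xi'_i)$ of the same type of equation with constant $\gamma'$; Lemma \ref{gammaunique} applied to $(\xi_i, \gamma)$ and $(\xi'_i, \gamma')$ forces $\gamma = \gamma'$. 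Since every subsequence of $(r u_i^r)_r$ has a further subsequence converging to the same $\gamma$, and the family is bounded, the full limit $\lim_{r \to 0} r u_i^r = \gamma$ exists and is independent of $i$. This establishes the first bullet; the second and third bullets are exactly the statements about the particular subsequence $(r_n)$ extracted above and the limiting equation \eqref{ergodic}, which we have already verified.

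The main obstacle — really the only delicate point — is making sure the passage to the limit in the Bellman equation is legitimate: one needs the finiteness of $H(i,\cdot)$ on all of $\mathbb{R}^{|\mathcal{V}(i)|}$ (so that convexity yields continuity, hence the arguments $\xi_j - \xi_i$, which are finite limits, are points of continuity) and the uniform boundedness of the increments $u_j^r - u_i^r$ (so that the arguments of $H$ actually converge and $r_n(u_i^{r_n} - u_1^{r_n}) \to 0$). Both are supplied by Proposition \ref{H} and Lemma \ref{lemmaboundedr} respectively, so the argument goes through without further work. A minor point to state carefully is that $\gamma$ is \emph{a priori} subsequence-dependent until Lemma \ref{gammaunique} is invoked; I would be explicit about the order (first extract, then pass to the limit, then use uniqueness to remove the subsequence dependence).
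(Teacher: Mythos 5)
Your proposal is correct and follows essentially the same route as the paper: extract a subsequence using the bounds of Lemma \ref{lemmaboundedr}, show the limits $\gamma_i$ coincide because $r_n(u_i^{r_n}-u_1^{r_n})\to 0$, pass to the limit in Eq. \eqref{HJstat} using the continuity of the (finite, convex) Hamiltonians, and invoke Lemma \ref{gammaunique} to remove the subsequence dependence. Your explicit ``every subsequence has a further subsequence with the same limit'' step is a slightly more careful write-up of what the paper leaves implicit, but the argument is the same.
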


\begin{proof}
From the boundedness results of Lemma \ref{lemmaboundedr}, we can consider a sequence $(r_n)_{n \in \mathbb N}$ converging towards $0$, such that, for all $i \in \mathcal{I}$, the sequences $(r_n u^{r_n}_i)_{n \in \mathbb N}$ and $(u_i^{r_n} - u_1^{r_n})_{n \in \mathbb N}$ are convergent, and we denote by $\gamma_i$ and $\xi_i$ the respective limits. We have
$$0 = \lim_{n\to +\infty}r_n(u_i^{r_n} - u_1^{r_n}) =  \lim_{n\to +\infty}r_n u_i^{r_n} - \lim_{n\to +\infty} r_n u_1^{r_n} = \gamma_i - \gamma_1.$$ Therefore, $\forall i \in \mathcal I, \gamma_i = \gamma_1$, and we denote by $\gamma$ this common limit.\\

Using Eq. \eqref{HJstat}, we have
$$ - r_n u^{r_n}_i + H\left(i, \left(u_j^{r_n} - u_i^{r_n}\right)_{j \in \mathcal{V}(i)}\right) = 0.$$
Passing to the limit when $n \to +\infty$, we obtain
\begin{equation*}
- \gamma + H\left(i, \left(\xi_j - \xi_i\right)_{j \in \mathcal{V}(i)}\right) = 0.
\end{equation*}

In order to complete the proof of the above proposition, we need to prove that $\gamma$ is independent of the choice of the sequence $(r_n)_{n \in \mathbb N}$. But this is a straightforward consequence of Lemma \ref{gammaunique}.
\end{proof}

Regarding the limits $(\xi_i)_{i \in \mathcal{I}}$, they cannot be characterized by Eq. \eqref{ergodic} because of the translation invariance property of the equation. However, when the Hamiltonian functions are increasing with respect to each coordinate (and not only non-decreasing), we have the following proposition:

\begin{proposition}
Assume that $\forall i \in \mathcal I, H(i, \cdot)$ is increasing with respect to each coordinate.\\

Let $(v_i)_{i \in \mathcal{I}}$ and $(w_i)_{i \in \mathcal{I}}$ be such that
\begin{align*}
    & -\gamma + H\left(i, \left(v_j - v_i\right)_{j \in \mathcal{V}(i)}\right) = 0, \quad \forall i \in \mathcal{I}, \\
    & -\gamma + H\left(i, \left(w_j - w_i\right)_{j \in \mathcal{V}(i)}\right) = 0, \quad \forall i \in \mathcal{I}.
\end{align*}
Then $\exists C, \forall i \in \mathcal{I}, w_i = v_i + C$, i.e. uniqueness is true up to a constant.
\end{proposition}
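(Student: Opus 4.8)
The plan is to show that the function $i \mapsto w_i - v_i$ is constant on $\mathcal I$ by a finite maximum-principle argument, using the \emph{strict} monotonicity of the Hamiltonians to upgrade a one-sided inequality into an equality, and then propagating along the graph by connectedness.

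First I would set $C = \max_{i \in \mathcal I}(w_i - v_i)$, which is attained since $\mathcal I$ is finite, say at a node $i^*$, and introduce $S = \{i \in \mathcal I : w_i - v_i = C\}$, so that $i^* \in S$ and $S \neq \emptyset$. The key step is to prove the implication: $i \in S \Rightarrow \mathcal V(i) \subset S$. Indeed, fix $i \in S$. By maximality of $C$, for every $j \in \mathcal V(i)$ we have $w_j - v_j \le C = w_i - v_i$, hence $w_j - w_i \le v_j - v_i$. By property \ref{Hnd} (non-decreasingness of $H(i,\cdot)$ in each coordinate),
$$H\!\left(i, (w_j - w_i)_{j \in \mathcal V(i)}\right) \le H\!\left(i, (v_j - v_i)_{j \in \mathcal V(i)}\right),$$
but both sides equal $\gamma$ by the two ergodic equations, so equality holds. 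Now I would invoke the hypothesis that $H(i,\cdot)$ is \emph{increasing} in each coordinate: if some $j_0 \in \mathcal V(i)$ satisfied $w_{j_0} - w_i < v_{j_0} - v_i$ strictly, then the left-hand side above would be strictly smaller than the right-hand side, a contradiction. Therefore $w_j - w_i = v_j - v_i$, i.e. $w_j - v_j = w_i - v_i = C$, for every $j \in \mathcal V(i)$; that is, $\mathcal V(i) \subset S$.

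Connectedness of $\mathcal G$ then concludes the proof. Given an arbitrary node $k$, there is a directed path $i^* = i_0 \to i_1 \to \cdots \to i_m = k$ with $i_{\ell+1} \in \mathcal V(i_\ell)$ for each $\ell$ (this is exactly the kind of reachability used in the proof of Lemma \ref{lemmaboundedr}). Applying the implication above successively along the path gives $i_1 \in S$, then $i_2 \in S$, and so on, so $k \in S$. Hence $S = \mathcal I$, which means $w_i = v_i + C$ for all $i \in \mathcal I$, the desired statement.

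The main obstacle is precisely the second step, the passage from the one-sided inequality $H(i,(w_j-w_i)_j) \le H(i,(v_j-v_i)_j)$ to the pointwise equality $w_j - w_i = v_j - v_i$ on $\mathcal V(i^*)$: this is where strict (not merely weak) monotonicity of $H(i,\cdot)$ is essential, since otherwise one cannot conclude that the maximizing set $S$ absorbs the neighborhoods $\mathcal V(i)$ and the propagation by connectedness breaks down. The remaining ingredients — attainment of the maximum on a finite set and reachability along directed paths — are routine and already available.
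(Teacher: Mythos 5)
Your proof is correct and takes essentially the same route as the paper's: both set $C = \max_{i}(w_i - v_i)$ and use the strict monotonicity of $H(i,\cdot)$ at a maximizer to force equality on its neighborhood, then propagate by connectedness (the paper phrases this as a contradiction at a node of the maximizing set having a neighbor outside it, while you phrase it directly as the maximizing set absorbing neighborhoods, but the content is identical). The only implicit point worth noting in both versions is that ``connected'' must be read as strong connectivity of the directed graph, which is also what the paper uses in Lemma \ref{lemmaboundedr}.
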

\begin{proof}
Let us consider $C = \sup_{i \in \mathcal{I}} w_i - v_i$.\\

By contradiction, if there exists $j \in \mathcal{I}$ such that $v_j + C > w_j$, then because $\mathcal G$ is connected, we can find $i^* \in \mathcal{I}$ such that $v_{i^*} + C = w_{i^*}$ and such that there exists $j^* \in \mathcal{V}(i^*)$ satisfying $v_{j^*} + C > w_{j^*}$.\\

Then, using the strict monotonicity of the Hamiltonian functions, we have the strict inequality $H\left(i^*, \left(\left(v_{j} + C\right) - \left(v_{i^*} + C\right)\right)_{j \in \mathcal{V}(i^*)}\right) > H\left(i, \left(w_{j} - w_{i^*}\right)_{j \in \mathcal{V}(i^*)}\right)$, in contradiction with the definition of $(v_i)_{i \in \mathcal{I}}$ and $(w_i)_{i \in \mathcal{I}}$.\\

Therefore $\forall i \in \mathcal{I}, w_i = v_i + C$.\\
\end{proof}

\begin{remark}
\label{mono}The strict monotonicity of the Hamiltonian functions depends on properties of the cost functions $(L(i,\cdot))_{i\in \mathcal I}$. In some sense, it means that there is no incentive to choose intensities equal to $0$, i.e. no incentive to pay a cost in order to cut existing edges between nodes.
\end{remark}

\section{Asymptotic analysis of the initial finite-horizon control problem in the non-discounted case}
\label{lt}

We now come to the asymptotic analysis of the initial finite-horizon control problem when $r=0$.\\

We have seen in Sections \ref{notation} and \ref{fh} that solving Problem \eqref{controlpbm} boils down to solving Eq. \eqref{mainEQH} with terminal condition \eqref{terminal condition}. Reversing the time over $(-\infty,T]$ by posing $\forall i \in \mathcal I, U_i : t \in \mathbb{R}_+^* \mapsto u^{T,0}_i(T-t)$, this equation, in the case $r=0$, becomes
\begin{equation}
\label{U}
-\frac{d}{dt} U_i(t) + H\left(i, \left(U_j(t) - U_i(t)\right)_{j \in \mathcal{V}(i)}\right) = 0, \quad \forall (i,t) \in \mathcal{I}\times\mathbb{R}_+^*, \quad \text{with } \forall i \in \mathcal{I}, \quad U_i(0) = g(i).
\end{equation}

To carry out the asymptotic analysis, i.e. in order to study the behavior of $(U_i(T))_{i \in \mathcal I}$ as $T \to +\infty$, we assume until the end of this paper that for all $i \in \mathcal{I}$, the function $H(i, \cdot) : p \in \mathbb{R}^{|\mathcal{V}(i)|} \mapsto H(i, p)$ is increasing with respect to each coordinate (see Remark \ref{mono} for a discussion on this strict monotonicity assumption).\\

We introduce the function $\hat{v} : (i,t) \in \mathcal{I} \times [0, +\infty) \mapsto U_i(t) - \gamma t$ where $\gamma$ is given by Proposition \ref{gammav}. Our goal is to study the asymptotic behavior of $\hat{v}$. Let us start with a lemma.

\begin{lemma}
\label{vbounded}
$\hat{v}$ is bounded.
\end{lemma}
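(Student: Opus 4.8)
\textbf{Proof plan for Lemma \ref{vbounded}.}

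The plan is to obtain uniform (in $t$) upper and lower bounds on $\hat v_i(t) = U_i(t) - \gamma t$ by comparing the solution $(U_i)_{i\in\mathcal I}$ of \eqref{U} with explicit affine-in-time ansätze built from the ergodic solution $(\xi_i)_{i\in\mathcal I}$ of \eqref{ergodic}. The natural candidates are the functions $\underline w_i(t) = \xi_i + \gamma t + a$ and $\overline w_i(t) = \xi_i + \gamma t + b$ for suitable constants $a,b\in\mathbb R$. Each of these solves the time-reversed equation \eqref{U} exactly: indeed $\frac{d}{dt}(\xi_i+\gamma t+c) = \gamma$, and $H(i,((\xi_j+\gamma t+c)-(\xi_i+\gamma t+c))_{j\in\mathcal V(i)}) = H(i,(\xi_j-\xi_i)_{j\in\mathcal V(i)}) = \gamma$ by \eqref{ergodic}, so $-\frac{d}{dt}(\xi_i+\gamma t+c) + H(i,\cdots) = -\gamma+\gamma = 0$. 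Hence each is simultaneously a sub- and a supersolution of \eqref{U} on any interval.

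Next I would invoke the comparison principle. One has to be a little careful because Proposition \ref{CompPrinc} is stated for the \emph{non-reversed} equation \eqref{mainEQH} with a terminal condition, with $r$ playing a role; here $r=0$ and we have an initial condition after time reversal. The clean way is to apply Proposition \ref{CompPrinc} directly (with $r=0$) to the original functions on $(-\infty,T]$: the constant-in-time-shifted ergodic profiles $t\mapsto \xi_i + \gamma(T-t) + c$ are solutions of \eqref{mainEQH}, and $u^{T,0}$ is the solution of \eqref{mainEQH} with terminal data $g$. Choosing $a = \min_{i\in\mathcal I}(g(i)-\xi_i)$ and $b = \max_{i\in\mathcal I}(g(i)-\xi_i)$ guarantees $\xi_i + a \le g(i) \le \xi_i + b$ at the terminal time, so by two applications of the comparison principle $\xi_i + \gamma(T-t) + a \le u^{T,0}_i(t) \le \xi_i + \gamma(T-t) + b$ for all $(i,t)\in\mathcal I\times(-\infty,T]$. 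Translating back via $U_i(t) = u^{T,0}_i(T-t)$ gives $\xi_i + \gamma t + a \le U_i(t) \le \xi_i + \gamma t + b$, i.e. $a \le \hat v_i(t) - \xi_i \le b$ for all $t\ge 0$ and all $i$. Since the $\xi_i$ are finitely many fixed reals, $\hat v$ is bounded.

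The main thing to get right is the bookkeeping around time reversal and the sign conventions in the comparison principle, plus the observation that the comparison principle as stated genuinely covers the $r=0$ case on an interval $[t',T]$ with $t'\to-\infty$ (which it does — its proof never divides by $r$). There is no serious analytic obstacle here: once one recognizes that the ergodic profiles shifted by constants are exact solutions of the evolution equation, the whole lemma reduces to ordering the terminal data, which is immediate from the definition of $a$ and $b$. A minor subtlety worth a sentence is that this argument uses the existence of \emph{some} ergodic pair $(\gamma,(\xi_i)_i)$, which is exactly what Proposition \ref{gammav} supplies; uniqueness of $\gamma$ (Lemma \ref{gammaunique}) is what makes the bound $[a,b]$ well-defined independently of the subsequence chosen in Proposition \ref{gammav}.
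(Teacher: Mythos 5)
Your proposal is correct and follows essentially the same route as the paper: compare $U$ with the exact solutions $t \mapsto \xi_i + \gamma t + C$ for $C = \inf_i(g(i)-\xi_i)$ and $C = \sup_i(g(i)-\xi_i)$, using the comparison principle after time reversal. Your extra remarks on the $r=0$ case of Proposition \ref{CompPrinc} and on the role of Lemma \ref{gammaunique} are sound bookkeeping that the paper leaves implicit.
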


\begin{proof}

Let us define for $C \in \mathbb R$ to be chosen, the function $$w^C : (i,t) \in \mathcal{I} \times [0,+\infty) \mapsto w^C_i(t) = \gamma t + \xi_i + C.$$ We have
\begin{align*}
-\frac{d}{dt}{w^C_i}(t) + H\left(i, \left(w^C_j(t) - w^C_i(t)\right)_{j \in \mathcal{V}(i)}\right) = - \gamma + H\left(i, \left(\xi_j - \xi_i\right)_{j \in \mathcal{V}(i)}\right) = 0, \quad \forall (i,t) \in \mathcal{I}\times [0,+\infty).
\end{align*}
By choosing $C_1 = \inf_{i \in \mathcal I} (g(i) - \xi_i)$, we have $\forall i \in \mathcal{I}, w^{C_1}_i(0) \le U_i(0)$. By using Proposition \ref{CompPrinc}, we see, after reversing the time, that therefore $\forall (i,t) \in \mathcal I \times [0, +\infty), w^{C_1}_i(t) \le U_i(t)$.\\

By choosing $C_2 = \sup_{i \in \mathcal I} (g(i) - \xi_i)$, we have $\forall i \in \mathcal{I}, U_i(0) \le w^{C_2}_i(0)$ and therefore, by the same reasoning, $\forall (i,t) \in \mathcal I \times [0, +\infty), U_i(t) \le w^{C_2}_i(t)$.\\

We conclude that $\forall (i,t) \in \mathcal I \times [0, +\infty), \xi_i + C_1 \le \hat{v}_i(t) \le \xi_i + C_2$.\\
\end{proof}

Now, for all $(s,y) \in \mathbb{R}_+ \times \mathbb{R}^N$, let us introduce the Hamilton-Jacobi equation

\begin{align}
\label{longtermeq}
\tag{$E_{s,y}$}
-\frac{d}{dt}\hat{y}_i(t) -\gamma + H\left(i, \left(\hat{y}_j(t) - \hat{y}_i(t)\right)_{j \in \mathcal{V}(i)}\right) = 0, \forall (i,t) \in \mathcal{I}\times[s, +\infty), \quad \textrm{with } \hat{y}_i(s) = y_i, \forall i \in \mathcal{I}.
\end{align}

Using the same reasoning as in Theorem \ref{global}, we easily see that for all $(s,y) \in \mathbb{R}_+ \times \mathbb{R}^N$, there exists a unique global solution to \eqref{longtermeq}.\\

The reason for introducing these equations lies in the following straightforward proposition regarding the function $\hat{v}$.\\
\begin{proposition} Let $y = (y_i)_{i \in \mathcal I} = \left(g(i)\right)_{i \in \mathcal I}$. Then $\hat{v}$ is the solution of $(E_{0,y})$.\\
\end{proposition}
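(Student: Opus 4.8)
The plan is to substitute the definition $\hat{v}_i(t) = U_i(t) - \gamma t$ directly into $(E_{0,y})$ and verify that both the differential equation and the initial condition hold; uniqueness of the solution to $(E_{0,y})$, already noted just above (by the argument of Theorem \ref{global}), then yields that $\hat v$ \emph{is} the solution. First I would record that $U_i$ is the time-reversal $t \mapsto u^{T,0}_i(T-t)$ of the $C^1$ solution of Eq. \eqref{mainEQH} provided by Theorem \ref{global}, hence $U_i$ is $C^1$ on $[0,+\infty)$ and so is $\hat v_i$, with $\frac{d}{dt}\hat{v}_i(t) = \frac{d}{dt}U_i(t) - \gamma$.

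Next I would observe the key (and only) point: adding the affine-in-time drift $\gamma t$ leaves the finite differences invariant, i.e. for every $j \in \mathcal{V}(i)$,
\[
\hat{v}_j(t) - \hat{v}_i(t) = \bigl(U_j(t) - \gamma t\bigr) - \bigl(U_i(t) - \gamma t\bigr) = U_j(t) - U_i(t),
\]
so that $H\bigl(i, (\hat{v}_j(t) - \hat{v}_i(t))_{j \in \mathcal{V}(i)}\bigr) = H\bigl(i, (U_j(t) - U_i(t))_{j \in \mathcal{V}(i)}\bigr)$. Plugging this and the derivative identity into the left-hand side of $(E_{0,y})$ gives
\[
-\Bigl(\tfrac{d}{dt}U_i(t) - \gamma\Bigr) - \gamma + H\bigl(i, (U_j(t) - U_i(t))_{j \in \mathcal{V}(i)}\bigr) = -\tfrac{d}{dt}U_i(t) + H\bigl(i, (U_j(t) - U_i(t))_{j \in \mathcal{V}(i)}\bigr),
\]
which vanishes identically on $\mathcal I \times \mathbb{R}_+^*$ by Eq. \eqref{U}, and extends to $t = 0$ by continuity. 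For the initial datum, $\hat{v}_i(0) = U_i(0) - \gamma\cdot 0 = g(i) = y_i$ for all $i \in \mathcal{I}$, so $\hat v$ satisfies $(E_{0,y})$; uniqueness concludes.

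There is essentially no obstacle here: the statement is a bookkeeping computation. The only thing worth stating with a little care is the matching of domains — $(E_{0,y})$ is posed on $\mathcal{I} \times [0,+\infty)$ while Eq. \eqref{U} is stated on $\mathcal{I} \times \mathbb{R}_+^*$ — which is handled by noting that $\hat v_i$ is $C^1$ up to $t=0$ with value $g(i)$, so the equation and the initial condition are both met.
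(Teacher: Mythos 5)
Your computation is correct and is precisely the intended argument: the paper labels this proposition "straightforward" and gives no proof, since substituting $\hat{v}_i(t) = U_i(t) - \gamma t$ into $(E_{0,y})$ reduces it to Eq.~\eqref{U} exactly as you show (the differences $\hat{v}_j - \hat{v}_i$ are unaffected by the common drift, the time derivative absorbs the $-\gamma$, and the initial condition is $U_i(0)=g(i)$). Your added care about regularity up to $t=0$ and about invoking uniqueness of the solution to $(E_{0,y})$ is sound and matches the framework already established after the introduction of \eqref{longtermeq}.
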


Eq. \eqref{longtermeq} satisfies the following comparison principle which is analogous to that of Proposition \ref{CompPrinc}.

\begin{proposition}[Comparison principle]
\label{longtermcomppr1}
Let $s \in \mathbb R_+$. Let $(\underline{y}_i)_{i \in \mathcal{I}}$ and $(\overline{y}_i)_{i \in \mathcal{I}}$ be two continuously differentiable functions on $[s, +\infty)$ such that
\begin{align*}
    & - \frac{d}{dt}\underline{y}_i(t) - \gamma + H\left(i, \left(\underline{y}_j(t) - \underline{y}_i(t)\right)_{j \in \mathcal{V}(i)}\right) \ge 0,\quad \forall (i,t) \in \mathcal{I} \times [s, +\infty),\\
    & - \frac{d}{dt}\overline{y}_i(t) - \gamma + H\left(i, \left(\overline{y}_j(t) - \overline{y}_i(t)\right)_{j \in \mathcal{V}(i)}\right) \le 0,\quad \forall (i,t) \in \mathcal{I} \times [s, +\infty),
\end{align*}
and $\forall i \in \mathcal I, \underline{y}_i(s) \le \overline{y}_i(s)$.\\

Then $\underline{y}_i(t) \le \overline{y}_i(t),  \forall (i,t) \in \mathcal{I} \times [s, +\infty)$.\\
\end{proposition}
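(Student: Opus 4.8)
The plan is to mirror the proof of Proposition \ref{CompPrinc}, the only substantive difference being that $[s,+\infty)$ is not compact, so I would first localize to a compact time interval; note also that, since there is no discount term here, the exponential weight $e^{-rt}$ is no longer needed. Fix $\varepsilon>0$ and an arbitrary $T'>s$, and define on $\mathcal{I}\times[s,T']$ the function $z_i(t) = \underline{y}_i(t) - \overline{y}_i(t) - \varepsilon(t-s)$. Since $\underline{y}_i$ and $\overline{y}_i$ are continuously differentiable, $z$ attains its maximum over the compact set $\mathcal{I}\times[s,T']$ at some point $(i^*,t^*)$.

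The core of the argument is to show that $t^*=s$, by contradiction. Suppose $t^*>s$; then, $t^*$ being either an interior maximizer or the right endpoint $T'$, one has $\frac{d}{dt}z_{i^*}(t^*)\ge 0$. On the other hand, differentiating $z_{i^*}$ and using the sub/super-solution inequalities,
\[
\frac{d}{dt}z_{i^*}(t^*) \le H\!\left(i^*,\left(\underline{y}_j(t^*)-\underline{y}_{i^*}(t^*)\right)_{j\in\mathcal{V}(i^*)}\right) - H\!\left(i^*,\left(\overline{y}_j(t^*)-\overline{y}_{i^*}(t^*)\right)_{j\in\mathcal{V}(i^*)}\right) - \varepsilon.
\]
By definition of $(i^*,t^*)$, $z_j(t^*)\le z_{i^*}(t^*)$ for every $j\in\mathcal{V}(i^*)$, and the terms $\varepsilon(t^*-s)$ cancel in these differences, so $\underline{y}_j(t^*)-\underline{y}_{i^*}(t^*)\le \overline{y}_j(t^*)-\overline{y}_{i^*}(t^*)$ for all $j\in\mathcal{V}(i^*)$. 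From \ref{Hnd} the bracketed difference of Hamiltonians is $\le 0$, whence $\frac{d}{dt}z_{i^*}(t^*)\le-\varepsilon<0$, contradicting $\frac{d}{dt}z_{i^*}(t^*)\ge 0$. Therefore $t^*=s$, and for all $(i,t)\in\mathcal{I}\times[s,T']$ we get $z_i(t)\le z_{i^*}(s) = \underline{y}_{i^*}(s)-\overline{y}_{i^*}(s)\le 0$, i.e. $\underline{y}_i(t)\le\overline{y}_i(t)+\varepsilon(t-s)$.

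To conclude, fix $(i,t)\in\mathcal{I}\times[s,+\infty)$; applying the above with any $T'\ge t$ and sending $\varepsilon\to 0$ gives $\underline{y}_i(t)\le\overline{y}_i(t)$. I do not expect any real obstacle: property \ref{Hnd} is exactly what makes the node-wise maximum of $\underline{y}-\overline{y}$ dissipative, and the only point requiring care is the truncation to $[s,T']$, since the supremum of $\underline{y}_i-\overline{y}_i$ over the unbounded interval need not be attained. (As in Proposition \ref{CompPrinc}, strict monotonicity of $H$ is not used here; monotonicity suffices.)
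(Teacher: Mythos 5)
Your proof is correct and takes essentially the approach the paper intends: the paper states this result without proof as ``analogous to Proposition \ref{CompPrinc}'', and your argument is precisely that proof run forward in time from the initial condition, with the perturbation $-\varepsilon(t-s)$ replacing $-\varepsilon(T-t)$ and the (necessary, and correctly handled) truncation to compact intervals $[s,T']$ to ensure the maximum of $z$ is attained.
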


Let us show that the strict monotonicity assumption on the Hamiltonian functions induces in fact a strong maximum principle.\\

\begin{proposition}[Strong maximum principle]
\label{longtermcomppr2}
Let $s \in \mathbb R_+$. Let $(\underline{y}_i)_{i \in \mathcal{I}}$ and $(\overline{y}_i)_{i \in \mathcal{I}}$ be two continuously differentiable functions on $[s, +\infty)$ such that
\begin{align*}
    & - \frac{d}{dt}\underline{y}_i(t) - \gamma + H\left(i, \left(\underline{y}_j(t) - \underline{y}_i(t)\right)_{j \in \mathcal{V}(i)}\right) = 0,\quad \forall (i,t) \in \mathcal{I} \times [s, +\infty),\\
    & - \frac{d}{dt}\overline{y}_i(t) - \gamma + H\left(i, \left(\overline{y}_j(t) - \overline{y}_i(t)\right)_{j \in \mathcal{V}(i)}\right) = 0,\quad \forall (i,t) \in \mathcal{I} \times [s, +\infty),
\end{align*}
and $\underline{y}(s) \lneq \overline{y}(s)$, i.e. $\forall j \in \mathcal I, \underline{y}_j(s) \le \overline{y}_j(s)$ and $\exists i \in \mathcal{I}, \underline{y}_i(s) < \overline{y}_i(s)$.\\

Then $\underline{y}_i(t) < \overline{y}_i(t),  \forall (i,t) \in \mathcal{I} \times (s, +\infty)$.\\
\end{proposition}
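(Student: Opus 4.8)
The plan is to argue by contradiction, combining the comparison principle (Proposition~\ref{longtermcomppr1}) to get a one-sided bound, a propagation argument along the edges of $\mathcal{G}$ that turns a single vanishing coordinate into a globally vanishing difference, and finally the uniqueness part of the Cauchy--Lipschitz theorem to reach a contradiction with the strict inequality at time $s$.

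First, since $\underline{y}(s) \lneq \overline{y}(s)$ implies in particular $\underline{y}_i(s) \le \overline{y}_i(s)$ for all $i$, Proposition~\ref{longtermcomppr1} gives $\underline{y}_i(t) \le \overline{y}_i(t)$ for all $(i,t) \in \mathcal{I} \times [s,+\infty)$. Set $z_i = \overline{y}_i - \underline{y}_i \ge 0$; subtracting the two equations and using $\overline{y}_j - \overline{y}_i = (\underline{y}_j - \underline{y}_i) + (z_j - z_i)$ yields, for all $(i,t) \in \mathcal{I} \times [s,+\infty)$,
\[
z_i'(t) = H\!\left(i, \left(\underline{y}_j(t) - \underline{y}_i(t) + z_j(t) - z_i(t)\right)_{j \in \mathcal{V}(i)}\right) - H\!\left(i, \left(\underline{y}_j(t) - \underline{y}_i(t)\right)_{j \in \mathcal{V}(i)}\right).
\]
Suppose, for contradiction, that there exist $t_0 > s$ and $i_0 \in \mathcal{I}$ with $z_{i_0}(t_0) = 0$. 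Since $z_{i_0} \ge 0$ on $[s,+\infty)$, it attains a global — hence, $t_0$ being interior, local — minimum at $t_0$, so $z_{i_0}'(t_0) = 0$. Plugging $z_{i_0}(t_0) = 0$ into the identity above and using that $z_j(t_0) \ge 0$ for every $j \in \mathcal{V}(i_0)$, the strict monotonicity of $H(i_0,\cdot)$ with respect to each coordinate forces $z_j(t_0) = 0$ for all $j \in \mathcal{V}(i_0)$: otherwise the right-hand side would be strictly positive, contradicting $z_{i_0}'(t_0) = 0$. Iterating this implication along the edges of the connected graph $\mathcal{G}$ (every node is reached from $i_0$ by a directed path), we obtain $z_k(t_0) = 0$ for every $k \in \mathcal{I}$, i.e. $\underline{y}(t_0) = \overline{y}(t_0)$.

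Finally, both $\underline{y}$ and $\overline{y}$ solve the same ODE system $y_i' = -\gamma + H\!\left(i, (y_j - y_i)_{j \in \mathcal{V}(i)}\right)$, whose right-hand side is locally Lipschitz in $y$ (by \ref{Hconvex}) and independent of $t$; hence the uniqueness part of the Cauchy--Lipschitz theorem applies in both time directions, and $\underline{y}(t_0) = \overline{y}(t_0)$ forces $\underline{y} \equiv \overline{y}$ on $[s,+\infty)$, in contradiction with the hypothesis that $\underline{y}_i(s) < \overline{y}_i(s)$ for some $i$. Therefore no such $(t_0,i_0)$ exists, which is precisely $\underline{y}_i(t) < \overline{y}_i(t)$ for all $(i,t) \in \mathcal{I} \times (s,+\infty)$. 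The delicate point is the propagation step: one must use $t_0 > s$ (so that $t_0$ is interior and $z_{i_0}'(t_0) = 0$) together with \emph{strict} monotonicity of the Hamiltonians to convert ``$z_j(t_0) \ge 0$ and $z_{i_0}'(t_0) = 0$'' into ``$z_j(t_0) = 0$''; connectedness then spreads the vanishing to all nodes, and the time-independence of the vector field is what licenses the backward-in-time uniqueness in the last step.
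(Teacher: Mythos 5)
Your proposal is correct and follows essentially the same route as the paper: the comparison principle gives $\underline{y}\le\overline{y}$, a touching point forces equal derivatives there, strict monotonicity of $H(i,\cdot)$ plus connectedness propagates the equality to every node, and Cauchy--Lipschitz uniqueness (run backward in time) contradicts the strict inequality at $s$. The only differences are cosmetic: you phrase the propagation as forward reachability along directed paths from $i_0$ rather than the paper's contrapositive "edge from the equality set to the strict-inequality set," and you invoke backward uniqueness directly down to $s$ instead of taking the minimum of the closed set where the two solutions coincide.
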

\begin{proof}

Using the above comparison principle, and reasoning by contradiction, we assume that there exists $(i,\bar{t}) \in \mathcal{I} \times (s, +\infty)$ such that $\underline{y}_i(\bar{t}) = \overline{y}_i(\bar{t})$. In particular, $\bar{t}$ is a maximizer of the function $ t \in (s,+\infty) \mapsto \underline{y}_i(t) - \overline{y}_i(t)$.\\

Therefore, $\frac{d}{dt}\underline{y}_i(\bar t) = \frac{d}{dt}\overline{y}_i(\bar t)$, and we have subsequently that for all $i \in \mathcal I$ such that $\underline{y}_i(\bar{t}) = \overline{y}_i(\bar{t})$,
$$H\left(i, \left(\underline{y}_j(\bar t) - \underline{y}_i(\bar t)\right)_{j \in \mathcal{V}(i)}\right) = H\left(i, \left(\overline{y}_j(\bar t) - \overline{y}_i(\bar t)\right)_{j \in \mathcal{V}(i)}\right).$$

Let us show now by contradiction that $\forall j \in \mathcal I, \underline{y}_j(\bar{t}) = \overline{y}_j(\bar{t})$.\\

If there exists $j \in \mathcal{I}$ such that $\underline{y}_j(\bar t) \neq \overline{y}_j(\bar t)$ (and then $\underline{y}_j(\bar t) < \overline{y}_j(\bar t)$), then because the graph $\mathcal G$ is connected, we can find $i^* \in \mathcal{I}$ such that $\underline{y}_{i^*}(\bar t) = \overline{y}_{i^*}(\bar t)$ and such that there exists $j^* \in \mathcal{V}(i^*)$ satisfying $\underline{y}_{j^*}(\bar t) < \overline{y}_{j^*}(\bar t)$. From the strict monotonicity assumption on $H(i^*, \cdot)$, we obtain
$$H\left(i^*, \left(\underline{y}_{j}(\bar t) - \underline{y}_{i^*}(\bar t)\right)_{j \in \mathcal{V}(i^*)}\right) < H\left(i^*, \left(\overline{y}_j(\bar t) - \overline{y}_{i^*}(\bar t)\right)_{j \in \mathcal{V}(i^*)}\right).$$

This contradicts the above inequality for $i=i^*$. As a consequence, $\forall j \in \mathcal I, \underline{y}_j(\bar{t}) = \overline{y}_j(\bar{t})$.\\

Let us define $F = \left\{t \in (s, +\infty), \forall j \in \mathcal I, \underline{y}_j(t) = \overline{y}_j(t)\right\}$. $F$ is nonempty since $\bar t \in F$. $F$ is also closed so that $\underline{y}(s) \lneq \overline{y}(s)$ implies that $t^* = \inf F = \min F >s$.\\

We know that $\underline{y}$ and $\overline{y}$ are two local solutions of the Cauchy problem $(E_{t^*,\underline{y}(t^*)})$. Because the Hamiltonian functions are locally Lipschitz, we can apply Cauchy-Lipschitz theorem to conclude that $\underline{y}$ and $\overline{y}$ are in fact equal in a neighborhood of $t^*$, which contradicts the definition of $t^*$.\\

We conclude that $\underline{y}_i(t) < \overline{y}_i(t),  \forall (i,t) \in \mathcal{I} \times (s, +\infty)$.\\
\end{proof}

For all $t \in \mathbb R_+$, let us now introduce the operator $S(t) : y \in \mathbb{R}^N \mapsto \hat{y}(t) \in \mathbb{R}^N$, where $\hat{y}$ is the solution of $(E_{0,y})$.\\

\begin{proposition}
\label{propS}
$S$ satisfies the following properties:\\
\begin{itemize}
    \item $\forall t,t' \in \mathbb R_+, S(t)\circ S(t') = S(t + t') =  S(t') \circ S(t)$.\\
    \item $\forall t \in \mathbb R_+,  \forall x,y \in \mathbb{R}^N,  \left\|S(t)(x) - S(t)(y)\right\|_{\infty} \le \left\| x - y \right\|_{\infty}.$
    In particular, $S(t)$ is continuous.\\
\end{itemize}
\end{proposition}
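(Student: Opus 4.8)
The plan is to extract both properties from two facts already in hand: the global existence and uniqueness for the Cauchy problem $(E_{s,y})$, and the comparison principle of Proposition~\ref{longtermcomppr1}. The key observation is that the system in $(E_{s,y})$ is \emph{autonomous} — its right-hand side depends on $t$ only through the unknown — and \emph{invariant under the addition of a global constant} to all coordinates, since the Hamiltonians see only the increments $\hat{y}_j - \hat{y}_i$.

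For the semigroup identity I would fix $t' \ge 0$ and $y \in \mathbb{R}^N$, let $\hat{y}$ be the solution of $(E_{0,y})$ (so that $S(t')(y) = \hat{y}(t')$), and observe that $t \mapsto \hat{y}(t+t')$ is a continuously differentiable solution of the same ODE system with initial datum $\hat{y}(t') = S(t')(y)$; by uniqueness it therefore coincides with the solution of $(E_{0,S(t')(y)})$, i.e. $\hat{y}(t+t') = S(t)(S(t')(y))$ for all $t \ge 0$. Taking an arbitrary $t$ yields $S(t+t')(y) = S(t)(S(t')(y))$, and commutativity follows immediately from $S(t+t') = S(t'+t)$. (That $S(0)$ is the identity is read off directly from the initial condition.)

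For non-expansiveness I would fix $t \ge 0$, $x,y \in \mathbb{R}^N$, set $C = \|x-y\|_\infty$, and let $\hat{x}, \hat{y}$ solve $(E_{0,x})$ and $(E_{0,y})$. By shift-invariance, $t \mapsto \hat{y}(t)+C$ (meaning $\hat{y}_i(t)+C$ for each $i$) again solves the system, now with initial value $y_i + C \ge x_i$ for every $i$; Proposition~\ref{longtermcomppr1}, applied with $\underline{y} = \hat{x}$ and $\overline{y} = \hat{y}+C$, then gives $\hat{x}_i(t) \le \hat{y}_i(t)+C$ for all $(i,t)$. Exchanging the roles of $x$ and $y$ provides the reverse bound, so $|\hat{x}_i(t)-\hat{y}_i(t)| \le C$ for every $i$, whence $\|S(t)(x)-S(t)(y)\|_\infty \le \|x-y\|_\infty$, and in particular $S(t)$ is continuous. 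I do not expect any genuine obstacle here: the whole argument is bookkeeping around autonomy and the addition of a global constant, plus an invocation of the comparison principle already proved; the only point deserving a line of care is that $S(t)$ is well-defined for every $t \ge 0$, which is exactly the global well-posedness of $(E_{s,y})$ recorded just below its statement.
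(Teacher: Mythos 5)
Your proposal is correct and follows essentially the same route as the paper: the semigroup identity is obtained from uniqueness for the (autonomous) Cauchy problem, and non-expansiveness is obtained by comparing $\hat{x}$ with $\hat{y}+\|x-y\|_\infty\vec{1}$ via Proposition~\ref{longtermcomppr1} and then swapping $x$ and $y$. You merely make explicit the shift-invariance of the equation under addition of a global constant, which the paper uses implicitly.
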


\begin{proof}
The first point, regarding the semi-group structure, is a natural consequence of Theorem \ref{global} (after a time reversion).\\

For the second point, let us introduce
$$ \underline{y}: t \in \mathbb{R}_+ \mapsto S(t)(x) \quad \text{and} \quad \overline{y}: t \in \mathbb{R}_+ \mapsto  S(t)(y) + \left\|x - y \right\|_{\infty} \Vec{1},$$
where $\Vec{1} = (1, \ldots, 1)' \in \mathbb{R}^N$.\\

We have $\underline{y}(0) = x \le y + \left\|x - y \right\|_{\infty} \Vec{1} = \overline{y}(0)$. By using Proposition \ref{longtermcomppr1}, we have that $\forall t \in \mathbb{R}_+, \underline{y}(t) \le \overline{y}(t)$ and then:
$$\forall t \in \mathbb{R}_+,\quad  S(t)(x) \le S(t)(y) + \left\|x - y \right\|_{\infty} \Vec{1}.$$
Reversing the role of $x$ and $y$ we obtain
\begin{align*}
    &\left\|S(t)(x) - S(t)(y)\right\|_{\infty} \le \left\| x - y \right\|_{\infty}.
\end{align*}
\end{proof}

Now, in order to study the asymptotic behavior of $\hat{v}$, let us define the function
\begin{align*}
&q: t \in \mathbb R_+ \mapsto q(t) = \sup_{i \in \mathcal{I}} (\hat{v}_i(t) - \xi_i).
\end{align*}

\begin{lemma}
\label{qni}
$q$ is a nonincreasing function, bounded from below. We denote by $q_\infty = \lim_{t \rightarrow +\infty} q(t)$ its lower bound.\\
\end{lemma}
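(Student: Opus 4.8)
The plan is to prove the two claims about $q$ separately, relying on the comparison principle (Proposition \ref{longtermcomppr1}) for the monotonicity and on the boundedness of $\hat{v}$ (Lemma \ref{vbounded}) for the lower bound.

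First I would establish that $q$ is nonincreasing. The key observation is that $t \mapsto \gamma t + \xi_i + q(t_0)$ is a (stationary-type) solution of the de-drifted equation $(E_{t_0,\cdot})$ for any fixed $t_0$, because of the ergodic relation \eqref{ergodic} satisfied by $(\xi_i)_{i \in \mathcal I}$; more precisely, writing things in terms of $S(t)$, the constant shift $y \mapsto \xi + c\Vec{1}$ of the profile $\xi$ evolves simply, and $\hat v(t)$ itself satisfies $\hat v(t_0 + t) = $ the solution at time $t$ of $(E_{t_0, \hat v(t_0)})$. By definition of $q(t_0)$ we have $\hat v_i(t_0) \le \xi_i + q(t_0)$ for all $i \in \mathcal I$, i.e. the initial datum of $\hat v$ at time $t_0$ is dominated (coordinatewise) by that of the stationary solution $w^{q(t_0)}: t \mapsto \gamma t + \xi_i + q(t_0)$ introduced (in slightly different notation) in the proof of Lemma \ref{vbounded}. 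Applying the comparison principle \ref{longtermcomppr1} on $[t_0,+\infty)$ gives $\hat v_i(t) \le \gamma t + \xi_i + q(t_0)$ for all $t \ge t_0$, equivalently $\hat v_i(t) - \xi_i - \gamma t \le q(t_0)$. But $q$ is defined on $\hat v_i(t) - \xi_i$, not $\hat v_i(t) - \xi_i - \gamma t$; so I should instead phrase the comparison directly in terms of the operator $S$ acting on the un-drifted solution $U$, or equivalently note that $\hat v$ solves $-\frac{d}{dt}\hat v_i -\gamma + H(i,(\hat v_j - \hat v_i)_j) = 0$ and that $t \mapsto \xi_i + q(t_0)$ is a (time-independent) solution of the same equation by \eqref{ergodic}. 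Then comparison on $[t_0,+\infty)$ yields $\hat v_i(t) \le \xi_i + q(t_0)$ for all $i$ and all $t \ge t_0$, hence taking the sup over $i$, $q(t) \le q(t_0)$ for all $t \ge t_0$. That is exactly the nonincreasing property.

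Next, the lower bound. By Lemma \ref{vbounded}, $\hat v$ is bounded, so there is a constant $M$ with $\hat v_i(t) \ge -M$ for all $(i,t)$; since the $\xi_i$ are finitely many real numbers, $\hat v_i(t) - \xi_i \ge -M - \max_i |\xi_i|$ for all $i,t$, and taking the supremum over $i$ gives $q(t) \ge -M - \max_i|\xi_i| > -\infty$ uniformly in $t$. Thus $q$ is bounded from below. A nonincreasing function bounded from below converges, so $q_\infty := \lim_{t \to +\infty} q(t)$ exists and equals $\inf_{t \ge 0} q(t)$, which justifies the notation in the statement.

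The only genuinely delicate point is the monotonicity step, and specifically making sure the comparison is applied to the right pair of sub/supersolutions: one must check that the constant-in-time profile $t \mapsto \xi_i$ is a supersolution (in fact an exact solution) of the de-drifted equation, which is immediate from \eqref{ergodic}, and that $\hat v$ is a (sub)solution with a dominated initial datum at the reference time $t_0$, which holds by definition of $q(t_0)$ together with the fact (from the Proposition preceding \ref{longtermcomppr1}) that $\hat v = S(\cdot)(g)$ and hence, by the semigroup property in Proposition \ref{propS}, $\hat v(t_0 + \cdot) = S(\cdot)(\hat v(t_0))$ solves $(E_{t_0, \hat v(t_0)})$. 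Everything else is routine; no new estimates are needed beyond Lemmas \ref{vbounded} and the comparison principle.
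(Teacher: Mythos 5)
Your proposal is correct and follows essentially the same route as the paper: compare $\hat{v}$ on $[s,+\infty)$ with the time-independent exact solution $t \mapsto \xi_i + q(s)$ of the de-drifted equation (which solves it by \eqref{ergodic}) via Proposition \ref{longtermcomppr1}, deduce $q(t)\le q(s)$ for $t\ge s$, and get the lower bound from Lemma \ref{vbounded}. Your self-correction from the drifted profile $\gamma t + \xi_i + q(t_0)$ to the constant profile $\xi_i + q(t_0)$ lands exactly on the paper's choice of supersolution, so there is no gap.
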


\begin{proof}
Let $s \in \mathbb R_+$. Let us define $\underline{y} : (i,t) \in \mathcal{I} \times [s, \infty) \mapsto \hat{v}_i(t)$ and $\overline{y} : (i,t) \in \mathcal{I} \times [s, \infty) \mapsto q(s) + \xi_i$.\\

We have $\forall i \in \mathcal I, \underline{y}_i(s) \le \overline{y}_i(s)$ and
$$-\frac{d}{dt}\overline{y}_i(t) - \gamma + H\left(i, \left(\overline{y}_j(t) - \overline{y}_i(t)\right)_{j \in \mathcal{V}(i)}\right) = -\gamma + H\left(i, \left(\xi_j-\xi_i\right)_{j \in \mathcal{V}(i)}\right) = 0, \forall (i,t) \in \mathcal{I}\times[s,+\infty).$$

From Proposition \ref{longtermcomppr1} we conclude that $\forall (i,t) \in \mathcal{I}\times[s,+\infty), \underline{y}_i(t) \le \overline{y}_i(t)$, i.e. $\hat{v}_i(t) \le q(s) + \xi_i$.\\

In particular, we obtain
\begin{align*}
    & q(t) = \sup_{i \in \mathcal{I}} (\hat{v}_i(t) - \xi_i) \le q(s), \quad \forall t \ge s.
\end{align*}

Now, because $\hat{v}$ is bounded, $q$ is also bounded and we know that its lower bound is its limit $q_\infty = \lim_{t \rightarrow +\infty} q(t)$.\\

\end{proof}

We can now state the main mathematical result of this section.\\

\begin{theorem}
The asymptotic behavior of $\hat{v}$ is given by
$$\forall i \in \mathcal I, \lim_{t \to +\infty} \hat{v}_i(t) = \xi_i + q_{\infty}.$$
\end{theorem}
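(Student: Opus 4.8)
The plan is to squeeze $\hat v_i(t)-\xi_i$ between two monotone real-valued functions of $t$ that converge to the same limit. Lemma~\ref{qni} already supplies the non-increasing upper one, $q$. Symmetrically, I would introduce
\[
p:\ t\in\mathbb R_+\ \longmapsto\ \inf_{i\in\mathcal I}\bigl(\hat v_i(t)-\xi_i\bigr),
\]
and show, by repeating the proof of Lemma~\ref{qni} with sub- and supersolutions exchanged (on each interval $[s,+\infty)$ compare $\hat v$ from below with the time-independent solution $(i,t)\mapsto \xi_i+p(s)$ of $(E_{s,\,\cdot})$, which solves the equation thanks to \eqref{ergodic}, using the comparison principle of Proposition~\ref{longtermcomppr1}), that $p$ is non-decreasing and bounded, hence converges to some $p_\infty\le q_\infty$. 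Since $p(t)\le \hat v_i(t)-\xi_i\le q(t)$ for every $i$ and every $t$, the theorem is reduced to proving the single equality $p_\infty=q_\infty$.

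To obtain this equality I would argue by compactness and rigidity on the time translates of $\hat v$. Since $\hat v$ is bounded (Lemma~\ref{vbounded}), the differences $\hat v_j-\hat v_i$ stay in a fixed compact set, and the equation $(E_{0,y})$ satisfied by $\hat v$ expresses $\tfrac{d}{dt}\hat v_i$ as $H(i,\cdot)$ evaluated there; by continuity of the Hamiltonians, $\hat v$ is therefore Lipschitz on $[0,+\infty)$. Hence, for any sequence $t_n\to+\infty$, the translates $t\mapsto \hat v(t+t_n)$ form an equibounded, equi-Lipschitz family on $[0,+\infty)$, and by the Arzel\`a--Ascoli theorem together with a diagonal extraction a subsequence converges locally uniformly to some $V:[0,+\infty)\to\mathbb R^N$. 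Passing to the limit in the integral form of $(E_{0,y})$ and using the continuity of the $H(i,\cdot)$, one checks that $V$ is $C^1$ and solves the same equation on $[0,+\infty)$, i.e. $V(t)=S(t)\bigl(V(0)\bigr)$ for all $t\ge0$.

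The rigidity step is the core of the proof. Because $q$ is non-increasing with limit $q_\infty$ and $\mathcal I$ is finite, for each fixed $t\ge 0$ we get $\sup_{i\in\mathcal I}\bigl(V_i(t)-\xi_i\bigr)=\lim_n q(t_n+t)=q_\infty$; thus $V_i(t)\le \xi_i+q_\infty$ for all $(i,t)$, with equality in at least one coordinate at each time $t$. On the other hand, by \eqref{ergodic}, the constant-in-time function $\overline y$ defined by $\overline y_i(t)=\xi_i+q_\infty$ is a solution of the same equation on $[0,+\infty)$. If $V(0)\ne \overline y(0)$, then $V(0)\le\overline y(0)$ with a strict inequality in at least one coordinate, and the strong maximum principle (Proposition~\ref{longtermcomppr2}) would force $V_i(t)<\xi_i+q_\infty$ for all $i$ and all $t>0$, contradicting the equality just noted. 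Hence $V(0)=\overline y(0)$, i.e. $\hat v_i(t_n)\to \xi_i+q_\infty$ for every $i$, and therefore $p(t_n)=\inf_i\bigl(\hat v_i(t_n)-\xi_i\bigr)\to q_\infty$. Since $p(t_n)\to p_\infty$ as well, we conclude $p_\infty=q_\infty$, and the sandwich $p(t)\le\hat v_i(t)-\xi_i\le q(t)$ gives $\lim_{t\to+\infty}\hat v_i(t)=\xi_i+q_\infty$ for all $i$.

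I expect the rigidity step to be the main obstacle: upgrading ``a bounded solution whose gap to the ceiling $\xi_i+q_\infty$ is attained at every instant'' to ``a solution identically equal to that ceiling'' is exactly what forces us to use the \emph{strict} monotonicity of the Hamiltonians, through Proposition~\ref{longtermcomppr2}; with merely non-decreasing $H$ the argument collapses (and indeed the conclusion can fail). By contrast, the reduction to $p_\infty=q_\infty$ and the compactness/stability extraction of $V$ are routine consequences of the a priori bounds in Lemmas~\ref{vbounded} and~\ref{qni} and of the well-posedness and stability of $(E_{s,y})$.
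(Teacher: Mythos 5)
Your proof is correct and follows essentially the same route as the paper: extract a locally uniform limit of time translates via Arzel\`a--Ascoli, observe that it solves the same equation and touches the ceiling $\xi+q_\infty\vec{1}$ at every time, and use the strong maximum principle (Proposition~\ref{longtermcomppr2}) together with the monotonicity of $q$ to force the limit to equal $\xi+q_\infty\vec{1}$. The auxiliary non-decreasing function $p$ and the sandwich are a harmless cosmetic variation on the paper's conclusion that every subsequential limit of $\hat v$ coincides with $\xi+q_\infty\vec{1}$.
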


\begin{proof}

From Lemma \ref{vbounded}, we know that there exists a sequence $(t_n)_n$ converging towards $+\infty$ such that $(\hat{v}(t_n))_n$ is convergent. Let us define $\hat{v}_{\infty} = \lim_{n \to +\infty} \hat{v}(t_n) $.\\

Let us consider the set $\mathcal{K} = \left\{ s \in [0,1] \mapsto \hat{v}(t_n+s) | n \in \mathbb{N}\right\}$. Because $\hat{v}$ is bounded and satisfies $(E_{0,y})$ for $y = (y_i)_{i \in \mathcal I} = (g(i))_{i \in \mathcal I}$, we know from Arzelà–Ascoli theorem that $\mathcal{K}$ is relatively compact in $C^0\left([0,1],\mathbb{R}^N\right)$. In other words, there exists a subsequence $\left(t_{\phi(n)}\right)_n$ and a function $z \in C^0\left([0,1],\mathbb R^N\right)$ such that the sequence of functions $\left(s \in [0,1] \mapsto \hat{v}\left(t_{\phi(n)}+s\right)\right)_n$ converges uniformly towards $z$. In particular $z(0) = \hat{v}_{\infty}$.\\

For all $n \in \mathbb{N}$ and for all $i \in \mathcal I$, we have $\hat{v}_i\left(t_{\phi(n)}\right)\le \xi_i + q(t_{\phi(n)})$, hence $z(0) = \hat{v}_{\infty} \le \xi + q_{\infty} \Vec{1}$.\\

Using Proposition \ref{propS}, we have $$\forall t \in [0,1], S(t)(z(0)) = S(t)\left(\lim_{n \to +\infty} \hat{v}\left(t_{\phi(n)}\right)\right) =  \lim_{n \to +\infty} S(t)\left(\hat v\left(t_{\phi(n)}\right)\right) = \lim_{n \to +\infty} \hat{v}\left(t+t_{\phi(n)}\right) = z(t).$$

As a consequence, we have
$$- \frac{d}{dt}z_i(t) - \gamma + H\left(i, \left(z_j(t) - z_i(t)\right)_{j \in \mathcal{V}(i)}\right) = 0,\quad \forall (i,t) \in \mathcal{I} \times [0,1].$$

Now, let us assume that $z(0) = \hat{v}_{\infty} \lneq \xi + q_{\infty} \Vec{1}$. By using Proposition \ref{longtermcomppr2} we obtain that $z(1) < \xi + q_{\infty} \Vec{1}$ and therefore there exists $n \in \mathbb N$ such that $\hat{v}\left(t_{\phi(n)} + 1\right) < \xi +q_\infty \Vec{1}$. However, this implies the inequality  $q\left(t_{\phi(n)} + 1\right) < q_\infty$ which contradicts the result of Lemma \ref{qni}.\\

This means that $\hat{v}_\infty = \xi + q_\infty \Vec{1}$.\\

In other words, for any sequence $(t_n)_n$ converging towards $+\infty$ such that $(\hat{v}(t_n))_n$ is convergent, the limit is $\xi + q_\infty \Vec{1}$. This means that in fact
$$\forall i \in \mathcal I, \lim_{t \to +\infty} \hat{v}_i(t) = \xi_i + q_{\infty}.$$
\end{proof}

\begin{remark}
In the proof of the above results, the convexity of the Hamiltonian functions $(H(i,\cdot))_{i \in \mathcal I}$ does not play any role. The results indeed hold as soon as the Hamiltonian functions are locally Lipschitz and increasing with respect to each coordinate.\\
\end{remark}

The following straightforward corollary states the asymptotic behavior of the value functions and optimal controls associated with the initial finite-horizon control problem when $r=0$.\\

\begin{corollary}
The asymptotic behavior of the value functions associated with Problem \eqref{controlpbm} when $r=0$ is given by
$$\forall i \in \mathcal I, \forall t \in \mathbb{R}_+, u^{T,r}_i(t) = \gamma (T-t) + \xi_i + q_{\infty} + \underset{T\to+\infty}{o}(1).$$
The limit points of the associated optimal controls for all $t \in \mathbb{R}_+$ as $T \to +\infty$ are feedback control functions verifying
$$\forall i \in \mathcal I, \forall j \in \mathcal V(i), \quad \lambda(i,j) \in \underset{\left(\lambda_{ij}\right)_{j \in \mathcal{V}(i)} \in {\mathbb{R}}^{|\mathcal{V}(i)|}_+ }{\textrm{argmax}} \left(\left(\sum_{j \in \mathcal{V}(i)} \lambda_{ij} (\xi_j - \xi_i)\right) - L\left(i, \left(\lambda_{ij}\right)_{j \in \mathcal{V}(i)}\right)\right).$$
\end{corollary}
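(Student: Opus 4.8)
The plan is to deduce the corollary directly from the main theorem by undoing the two changes of variables that were used to pass from $u^{T,0}$ to $\hat{v}$. Recall that $U_i(t) = u^{T,0}_i(T-t)$ and $\hat{v}_i(t) = U_i(t) - \gamma t$, so that $u^{T,0}_i(t) = U_i(T-t) = \hat{v}_i(T-t) + \gamma(T-t)$. For fixed $t \in \mathbb{R}_+$, the argument $T - t \to +\infty$ as $T \to +\infty$, so the main theorem gives $\hat{v}_i(T-t) \to \xi_i + q_\infty$. Substituting yields $u^{T,0}_i(t) = \gamma(T-t) + \xi_i + q_\infty + o(1)$ as $T \to +\infty$, which is the first claim. (Here one should note that $\hat{v}$, $\xi$, $\gamma$, $q_\infty$ do not depend on $T$: indeed $\hat{v}$ solves $(E_{0,y})$ with $y = (g(i))_i$, an equation with no $T$ in it, and by Theorem \ref{global} the solution of \eqref{U} on $\mathbb{R}_+^*$ is the same regardless of how large the original horizon was — this is exactly the translation/time-reversal consistency already used implicitly.)

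For the statement on optimal controls, I would combine Theorem \ref{theocontrol} with the convergence just established. By Theorem \ref{theocontrol}, an optimal feedback control for the horizon-$T$ problem at node $i$ and time $t$ lies in $\operatorname{argmax}_{(\lambda_{ij})_j \in \mathbb{R}^{|\mathcal{V}(i)|}_+}\left(\sum_{j \in \mathcal{V}(i)} \lambda_{ij}(u^{T,0}_j(t) - u^{T,0}_i(t)) - L(i,(\lambda_{ij})_j)\right)$. From the first part, $u^{T,0}_j(t) - u^{T,0}_i(t) \to \xi_j - \xi_i$ as $T \to +\infty$ (the $\gamma(T-t)$ and $q_\infty$ terms cancel in the difference). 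So any limit point of the optimal controls is a limit point of a sequence of maximizers of $p \mapsto \sum_j \lambda_{ij} p_j - L(i,(\lambda_{ij})_j)$ taken along $p = (u^{T,0}_j(t) - u^{T,0}_i(t))_j \to (\xi_j - \xi_i)_j$. It remains to argue that such a limit point is itself a maximizer for the limiting vector $(\xi_j - \xi_i)_j$.

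The main obstacle is precisely this last closedness-of-$\operatorname{argmax}$ step, which is an upper-semicontinuity property of the argmax correspondence. I would handle it as in the proof of Proposition \ref{H}: fix $i$, let $p^T = (u^{T,0}_j(t)-u^{T,0}_i(t))_j \to p^\infty := (\xi_j-\xi_i)_j$, and let $\lambda^T$ be a maximizer for $p^T$. Since $(p^T)_T$ is bounded, the uniform coercivity estimate from assumption \ref{asympAssump} (applied with $\|p\|_\infty$ replaced by a uniform bound over $T$) confines all the $\lambda^T$ to a common compact set $\mathcal{C}$, so a subsequence converges to some $\lambda^\infty \in \mathcal{C}$. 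For any competitor $\mu \in \mathbb{R}^{|\mathcal{V}(i)|}_+$, passing to the limit in $\sum_j \lambda^T_{ij} p^T_j - L(i,\lambda^T) \ge \sum_j \mu_{ij} p^T_j - L(i,\mu)$ and using that $(\lambda,p) \mapsto \sum_j \lambda_{ij} p_j$ is continuous while $-L(i,\cdot)$ is upper semi-continuous by \ref{lscAssump}, one gets $\sum_j \lambda^\infty_{ij} p^\infty_j - L(i,\lambda^\infty) \ge \sum_j \mu_{ij} p^\infty_j - L(i,\mu)$, i.e. $\lambda^\infty$ is a maximizer for $p^\infty = (\xi_j - \xi_i)_j$. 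This is exactly the asserted characterization, and it completes the proof.
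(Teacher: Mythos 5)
Your proposal is correct and is essentially the argument the paper intends: the paper labels this a "straightforward corollary" and omits the proof, which is precisely the unwinding of the substitutions $u^{T,0}_i(t)=\gamma(T-t)+\hat v_i(T-t)$ together with Theorem \ref{theocontrol} and the upper semicontinuity of the $\textrm{argmax}$ correspondence. Your careful justification of the last step (uniform compactness of maximizers via \ref{asympAssump} and passage to the limit via \ref{lscAssump}) supplies exactly the detail the paper leaves implicit, reusing the mechanism from the proof of Proposition \ref{H}.
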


\begin{remark}
    If the Hamiltonian functions $(H(i, \cdot))_i$ are differentiable (which is guaranteed if $(L(i, \cdot))_i$ are convex functions that are strictly convex on their respective domain) then the above corollary states in particular the convergence, for all $t \in \mathbb{R}_+$, as $T \to +\infty$ of the optimal controls of Theorem~\ref{theocontrol} towards the unique element of the above $\textrm{argmax}$.\\
\end{remark}

\bibliographystyle{plain}

\section*{Compliance with Ethical Standards}

\begin{itemize}
  \item Conflict of Interest: The authors declare that they have no conflict of interest.
  \item Funding: The authors declare that there was no funding associated with this work.
\end{itemize}

\end{document}